\documentclass{amsart}
\usepackage{amsmath,amssymb,amsthm,enumerate,amscd}
\usepackage{url}
\usepackage{array}

\newtheorem{theorem}{Theorem}[section]
\newtheorem{lemma}[theorem]{Lemma}
\newtheorem{proposition}[theorem]{Proposition}

\newtheorem{definition}[theorem]{Definition}
\newtheorem{rem}[theorem]{Remark}
\newtheorem{preremark}[theorem]{Remark}
	\newenvironment{remark}%
		{\begin{preremark}\rm}{\end{preremark}}
\newtheorem{preexample}[theorem]{Example}
		{\begin{preexample}\rm}{\end{preexample}}
\newcommand{\QQ}{\mathbb{Q}}
\newcommand{\ZZ}{\mathbb{Z}}
\newcommand{\FF}{\mathbb{F}}

\newcommand{\NN}{\mathbb{N}}
\newcommand{\OK}{\mathcal{O}_K}

\renewcommand{\O}{\mathcal{O}}
\newcommand{\frakp}{\mathfrak{p}}
\newcommand{\norm}{\operatorname{Norm}}
\newcommand{\disc}{\operatorname{disc}}
\newcommand{\End}{\operatorname{End}}

\newcommand{\ord}{\operatorname{ord}}

\def\Z{\mathbb{Z}}
\def\Q{\mathbb{Q}}
\def\F{\mathbb{F}}

\def\P{\mathbb{P}}

\def\Fp{\F_p}

\def\isom{\xrightarrow{\sim}}

\title[Deterministic elliptic curve primality proving]{Deterministic elliptic curve primality proving for a special sequence of numbers}
\author[A. Abatzoglou]{Alexander Abatzoglou}
\address{Department of Mathematics, University of California, Irvine, CA 92697}
\email{aabatzog@math.uci.edu}
\author[A. Silverberg]{Alice Silverberg}
\address{Department of Mathematics, University of California, Irvine, CA 92697}
\email{asilverb@math.uci.edu}
\author[A.V. Sutherland]{\\Andrew V.\ Sutherland}
\address{Department of Mathematics, MIT, Cambridge, MA 02139}
\email{drew@math.mit.edu}
\author[A. Wong]{Angela Wong}
\address{Department of Mathematics, University of California, Irvine, CA 92697}
\email{awong@math.uci.edu}
\thanks{This work was supported by the National Science Foundation under grants CNS-0831004 and DMS-1115455.}

\begin{document}

\begin{abstract}
We give a deterministic algorithm that very quickly proves the primality or compositeness of the integers $N$ in a certain sequence, using an elliptic curve $E/\Q$ with complex multiplication by the ring of integers of $\Q(\sqrt{-7})$.
The algorithm uses $O(\log N)$ arithmetic operations in the ring $\Z/N\Z$, implying a bit complexity that is quasi-quadratic in $\log N$.
Notably, neither of the classical ``$N-1$" or ``$N+1$" primality tests apply to the integers in our sequence.
We discuss how this algorithm may be applied, in combination with sieving techniques, to efficiently search for very large primes.
This has allowed us to prove the primality of several integers with more than 100,000 decimal digits, the largest of which has more than a million bits in its binary representation.  
%We believe that this is
At the time it was found, it was the largest proven prime $N$ for which no significant partial factorization of $N-1$ or $N+1$ is known (as of final submission it was second largest).
\end{abstract}

\maketitle

\section{Introduction}
With the celebrated result of Agarwal, Kayal, and Saxena \cite{aks04}, one can now unequivocally determine the primality or compositeness of any integer in deterministic polynomial time.  With the improvements of Lenstra and Pomerance \cite{lenstrapomerance11}, the AKS algorithm runs in $\tilde{O}(n^6)$ time, where $n$ is the size of the integer to be tested (in bits).  However, it has long been known that for certain special sequences of integers, one can do much better.  The two most famous examples are the Fermat numbers $F_k=2^{2^k}+1$, to which one may apply P\'epin's criterion \cite{pepin1877}, and the Mersenne numbers $M_p=2^p-1$, which are subject to the Lucas-Lehmer test~\cite{lehmer30}.  In both cases, the corresponding algorithms are deterministic and run in $\tilde{O}(n^2)$ time.

In fact, every prime admits a proof of its primality that can be verified by a deterministic algorithm in $\tilde{O}(n^2)$ time.
Pomerance shows in \cite{pomerance87} that for every prime $p > 31$ there exists an elliptic curve $E/\Fp$ with an $\Fp$-rational point $P$ of order $2^r > (p^{1/4}+1)^2$, which allows one to establish the primality of $p$ using just $r$ elliptic curve group operations.
Elliptic curves play a key role in Pomerance's proof; the best analogous result using classical primality certificates yields an $\tilde{O}(n^3)$ time bound \cite{pratt75}, cf.\ \cite[Thm.\ 4.1.9]{crandal05}.

The difficulty in applying Pomerance's result lies in finding the pair $(E,P)$, a task for which no efficient method is currently known.
Rather than searching for suitable pairs $(E,P)$, 
we instead fix a finite set of curves $E_a/\Q$, each equipped with a known rational point~$P_a$ of infinite order.
To each positive integer $k$ we associate one of the curves $E_a$ and define an integer~$J_k$ for which we give a necessary and sufficient condition for primality: $J_k$ is prime if and only if the reduction of $P_a$ in $E_a(\Fp)$ has order $2^{k+1}$ for every prime $p$ dividing~$J_k$.
Of course $p=J_k$ when $J_k$ is prime, but this condition can easily be checked without knowing the prime factorization of $J_k$.
This yields a deterministic algorithm that runs in $\tilde{O}(n^2)$ time (see Algorithm~\ref{algorithm}).

Our results extend the methods used by Gross~\cite{gross04}, Denomme and Savin~\cite{denommesavin08},  Tsumura~\cite{tsumura11}, and Gurevich and Kunyavski\u{\i} \cite{gurkun12},
all of which fit within a general framework laid out by Chudnovsky and Chudnovsky in~\cite{chudnovsky86} for determining the primality of integers in special sequences using elliptic curves with complex multiplication (CM).
The elliptic curves that we use lie in the family of quadratic twists defined by the equations
\begin{equation}
\label{twist_weierstrass}
E_a: y^2 = x^3 - 35a^2x - 98a^3,
\end{equation}
for square-free integers $a$ such that $E_a(\Q)$ has positive
rank. Each curve has
good reduction outside of $2$,  $7$, and the prime divisors of $a$, and has
CM by $\Z[\alpha]$, 
where 
$$\alpha = \frac{1 + \sqrt{-7}}{2}.$$
For each curve $E_a$, we fix a point $P_a\in E_a(\Q)$ of infinite order with $P_a\not\in2E_a(\Q)$.

For each positive integer $k$, let
$$
j_k = 1 + 2\alpha^k\in\Z[\alpha],  \qquad J_k = j_k\bar{j_k} = 1 + 2(\alpha^k + \bar{\alpha}^k) + 2^{k+2}
\in\NN.
$$
The integer sequence $J_k$ satisfies the linear recurrence relation
$$
J_{k+4} = 4J_{k+3}-7J_{k+2}+8J_{k+1}-4J_k,
$$
with initial values $J_1=J_2=11$, $J_3=23$, and $J_4=67$.
Then (by~Lemma~\ref{divislemma}) $J_k$ is composite for $k\equiv 0 {\pmod 8}$ and for $k\equiv 6\pmod{24}$.
To each other value of $k$ we assign a squarefree integer $a$, based on the congruence class of $k\pmod{72}$, as listed in Table~\ref{table:twistspts}.
Our choice of $a$ is based on two criteria.
First, it ensures that when $J_k$ is prime, the Frobenius endomorphism of $E_a\bmod J_k$ corresponds to complex multiplication by $j_k$ (rather than $-j_k$) and 
$$
E_a(\Z/J_k\Z)\simeq \Z/2\Z\times\Z/2^{k+1}\Z.
$$
Second, it implies that when $J_k$ is prime, the reduction of the point $P_a$ has order $2^{k+1}$ in $E(\Z/J_k\Z)$.
The second condition is actually stronger than necessary (in general, one only needs $P_a$ to have order greater than $2^{k/2+1}$), but it simplifies matters.
Note that choosing a sequence of the form $j_k = 1+\Lambda_k$ means
that $E_a(\Z[\alpha]/(j_k)) \simeq \Z[\alpha]/\Lambda_k$, whenever $J_k$ is prime and 
$j_k$ is the Frobenius endomorphism of 
$E_a$ mod $J_k$ (see Lemma \ref{structure1}).

We prove in Theorem~\ref{mainthm} that the integer $J_k$ is prime if and only if the point~$P_a$ has order $2^{k+1}$ on ``$E_a\bmod J_k$".  More precisely, we prove that if one applies the standard formulas for the elliptic curve group law to compute scalar multiples $Q_i=2^iP_a$ using projective coordinates $Q_i=[x_i,y_i,z_i]$ in the ring $\ZZ/J_k\ZZ$, then $J_k$ is prime if and only if $\gcd(J_k,z_k)=1$ and $z_{k+1}=0$.
This allows us to determine whether~$J_k$ is prime or composite using $O(k)$ operations in the ring $\ZZ/J_k\ZZ$, yielding a bit complexity of $O(k^2\log k\log\log k)=\tilde{O}(k^2)$ (see Proposition~\ref{prop:complexity} for a more precise bound).

We note that, unlike the Fermat numbers, the Mersenne numbers, and many similar numbers of a special form, the integers $J_k$ are not amenable to any of the classical ``$N-1$" or ``$N+1$" type primality tests (or combined tests) that are typically used to find very large primes (indeed, the 500 largest primes currently listed in~\cite{caldwell12} all have the shape $ab^n\pm 1$ for some small integers $a$ and $b$).

In combination with a sieving approach described in \S \ref{section:computations}, we have used our algorithm to determine the primality of $J_k$ for all $k\le 1.2\times 10^6$. 
The prime values of~$J_k$ are listed in Table~\ref{table:primes}.
At the time it was found, the prime  $J_{1,111,930}$, which has 334,725
decimal digits, 
was the largest proven prime $N$ for which no significant partial factorization of either $N-1$ or $N+1$ was known. On July 4, 2012 it was superseded by a 377,922 digit prime found by
David Broadhurst \cite{Broadhurst} for which no significant factorization of $N-1$ or $N+1$ is known;
Broadhurst constructed an ECPP primality proof for this prime, but it is not a Pomerance proof.

Generalizations have been suggested to the settings
of higher dimensional abelian varieties with complex multiplication,
algebraic tori, and group schemes by
Chudnovsky and Chudnovsky \cite{chudnovsky86}, Gross \cite{gross04},
and  Gurevich and Kunyavski\u{\i} \cite{gurkun}, respectively.
In the PhD theses of the first and fourth authors, and in a forthcoming paper, 
we are extending the results in this paper to a more general framework.
In that paper we will also 
explain why, when restricting to elliptic curves over $\Q$, this method 
requires curves with CM
by $\Q(\sqrt{-D})$ with $D=$ 1, 2, 3, or 7.

\smallskip 

\noindent{\bf{Acknowledgments:}} 
We thank Daniel J.~Bernstein, Fran\c{c}ois Morain, 
Carl Pomerance, and Karl Rubin for helpful conversations,
and the organizers of ECC 2010, the First Abel Conference, and 
the AWM Anniversary Conference 
where useful discussions took place.
We thank the reviewers for helpful comments.
We also thank Henri Cohen and Richard Pinch for helpful comments given
at ANTS-X.

\section{Relation to Prior Work}
\label{PriorWork}
In \cite{chudnovsky86}, Chudnovsky and Chudnovsky consider certain sequences of integers $s_k = \norm_{K/\Q}(1+\alpha_0\alpha_1^k)$, defined by algebraic integers $\alpha_0$ and $\alpha_1$ in an imaginary quadratic field $K=\Q(\sqrt{D})$.
They give sufficient conditions for the primality of $s_k$, using an elliptic curve $E$ with CM by $K$.
In our setting, $D=-7$, $\alpha_0=2$, $\alpha_1=(1+\sqrt{-7})/2$, and $J_k=s_k$.
The key difference here is that we give necessary and sufficient criteria for primality that can be efficiently checked by a deterministic algorithm.
This is achieved by carefully selecting the curves $E_a/\Q$ that we use, so that in each case we are able to prove that the point $P_a\in E_a(\Q)$ reduces to a point of maximal order $2^{k+1}$ on $E_a\bmod J_k$, whenever $J_k$ is prime.
Without such a construction, we know of no way to obtain \emph{any} non-trivial point on $E\bmod s_k$ in deterministic polynomial time.

Our work is a direct extension of the techniques developed by Gross~\cite{gross04,YanJames}, Denomme and Savin~\cite{denommesavin08}, Tsumura \cite{tsumura11}, and 
 Gurevich and Kunyavski\u{\i} \cite{gurkun12}, who use elliptic curves with CM by the ring of integers of $\Q(i)$ or $\Q(\sqrt{-3})$ to test the primality of Mersenne, Fermat, and related numbers.
However, as noted by Pomerance \cite[\S 4]{pomerance10}, the integers considered in \cite{denommesavin08} can be proved prime using classical methods that are more efficient and do not involve elliptic curves, and the same applies to \cite{gross04,tsumura11,YanJames,gurkun12}.
But this is not the case for the sequence we consider here.

\section{Background and Notation}
\label{notation}

\subsection{Elliptic curve primality proving}
Primality proving algorithms based on elliptic curves have been proposed since the mid-1980s.
Bosma \cite{bosma85} and Chudnovsky and Chudnovsky \cite{chudnovsky86} considered a setting similar to the one employed here, using elliptic curves to prove the primality of numbers of a special form; Bosma proposed the use of elliptic curves with complex multiplication by $\mathbb Q(i)$ or $\mathbb Q(\sqrt{-3})$, while Chudnovsky and Chudnovsky considered a wider range of elliptic curves and other algebraic varieties.
Goldwasser and Kilian \cite{goldwasserkilian86,goldwasserkilian99} gave the first general purpose elliptic curve primality proving algorithm, using randomly generated elliptic curves.
Atkin and Morain \cite{atkinmorain93,MorainTitanic} developed an improved version of the Goldwasser-Kilian algorithm that uses the CM method to construct the elliptic curves used, rather than generating them at random (it does rely on probabilistic methods for root-finding).
With asymptotic improvements due to Shallit, the Atkin-Morain algorithm has a heuristic expected running time of $\tilde{O}(n^4)$, which makes it the method of choice for general purpose primality proving \cite{morain07}.
%We also note that 
Gordon \cite{gordon1989} proposed a general purpose compositeness test using supersingular reductions of CM elliptic curves over~$\Q$.

Throughout this paper, if $E \subset \P^2$ is an elliptic curve over $\QQ$, we shall write points $[x,y,z]\in E(\QQ)$ so that $x,y,z \in \ZZ$ and $\gcd(x,y,z) = 1$, and we may use $(x,y)$ to denote the projective point $[x,y,1]$.

We say that a point $P=[x,y,z]\in E(\Q)$ is \emph{zero mod} $N$ when $N$ divides $z$; otherwise $P$ is \emph{nonzero mod} $N$.
Note that if $P$ is zero mod $N$ then $P$ is zero mod~$p$ for all primes $p$ dividing $N$.

\begin{definition}
\label{stronglynonzero}
Given an elliptic curve $E$ over $\QQ$, a point $P = [x,y,z] \in E(\QQ)$, and $N\in\Z$, 
we say that $P$ is \em{strongly nonzero} mod $N$ if $\gcd(z,N) = 1$.
\end{definition}
\noindent
If $P$ is strongly nonzero mod $N$, then $P$ is nonzero mod $p$ for every prime $p|N$, and
if~$N$ is prime, then $P$ is strongly nonzero mod $N$ if and only if $P$ is nonzero mod~$N$.

We rely on the following fundamental result, which can be found in \cite{goldwasserkilian86,LenstraICM,goldwasserkilian99}.

\begin{proposition}
\label{prop:ECPP2}
Let $E/\Q$ be an elliptic curve, let $N$ be a positive integer prime to $\disc(E)$,
let $P\in E(\Q)$, and let $m > (N^{1/4}+1)^2$.
Suppose $mP$ is zero mod~$N$ and $(m/q)P$ is strongly nonzero mod~$N$ for all primes $q|m$.
Then $N$ is prime.
\end{proposition}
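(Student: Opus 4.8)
The plan is to argue by contradiction. Suppose $N$ is composite; then it has a prime divisor $p$ with $p\le\sqrt{N}$. Since $N$ is prime to $\disc(E)$, so is $p$, hence $E$ has good reduction at $p$ and reduction modulo $p$ is a group homomorphism $E(\Q)\to E(\F_p)$; write $\bar{P}$ for the image of $P$. With the normalization $[x,y,z]$, $\gcd(x,y,z)=1$, a point of $E(\Q)$ is zero mod $p$ precisely when its reduction is the identity $O\in E(\F_p)$: if $p\mid z$, reducing the Weierstrass equation forces $p\mid x$, hence $p\nmid y$, so the reduction is $[0,1,0]=O$; the converse is clear. In particular, as recorded just before the statement, strongly nonzero mod $N$ implies nonzero mod $p$.

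Next I would determine the order of $\bar{P}$ in $E(\F_p)$. From $mP$ zero mod $N$ we get $mP$ zero mod $p$, so $m\bar{P}=O$ and $\ord(\bar{P})\mid m$. If $\ord(\bar{P})$ were a proper divisor of $m$, it would divide $m/q$ for some prime $q\mid m$, giving $(m/q)\bar{P}=O$; but $(m/q)P$ is strongly nonzero mod $N$, hence nonzero mod $p$, so $(m/q)\bar{P}\ne O$, a contradiction. Therefore $\ord(\bar{P})=m$ exactly, and Lagrange's theorem gives $m\mid\#E(\F_p)$, so $m\le\#E(\F_p)$.

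Finally, Hasse's theorem bounds $\#E(\F_p)\le p+1+2\sqrt{p}=(\sqrt{p}+1)^2$, and $p\le\sqrt{N}$ yields $\#E(\F_p)\le(N^{1/4}+1)^2<m$, contradicting $m\le\#E(\F_p)$. Hence $N$ has no such proper prime divisor, and $N$ is prime.

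I expect no real obstacle here: the content is the classical ``elliptic curve Pocklington'' argument, and the steps that require care are bookkeeping rather than depth — checking that the projective ``zero mod $p$'' condition corresponds to vanishing in $E(\F_p)$ so that the hypotheses transfer along reduction, confirming that $N$ being prime to $\disc(E)$ guarantees good reduction at every $p\mid N$ (so that reduction is a homomorphism), and applying Hasse's bound in the correct direction. The one genuinely external input is Hasse's estimate: a crude bound such as $\#E(\F_p)\le 2p$ is far too weak to force $\#E(\F_p)<m$, so the near-optimal $(\sqrt{p}+1)^2$ is essential.
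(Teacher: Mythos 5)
Your proof is correct. The paper does not actually prove Proposition~\ref{prop:ECPP2}---it cites Goldwasser--Kilian and Lenstra for it---and your argument is precisely the standard one from those sources: reduce modulo a prime divisor $p\le\sqrt{N}$ (good reduction being guaranteed by $\gcd(N,\disc(E))=1$), use the strongly-nonzero hypotheses to show $\bar{P}$ has order exactly $m$ in $E(\F_p)$, and contradict the Hasse bound $\#E(\F_p)\le(\sqrt{p}+1)^2\le(N^{1/4}+1)^2<m$. It also matches in spirit the paper's own proof of the variant Proposition~\ref{prop:ECPP2variant}, which needs the additional input that the kernel of reduction mod $q^r$ is a $q$-group only because there ``strongly nonzero'' is weakened to ``nonzero.''
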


To make practical use of Proposition~\ref{prop:ECPP2}, one needs to know the prime factorization of $m$.
For general elliptic curve primality proving this presents a challenge; the algorithms of Goldwasser-Kilian and Atkin-Morain use different approaches to ensure that $m$ has an easy factorization, but both must then recursively construct primality proofs for the primes $q$ dividing $m$.
In our restricted setting we effectively fix the prime factorization of $m=2^{k+1}$ ahead of time.

Next we give a variant of Proposition~\ref{prop:ECPP2} that replaces 
``strongly nonzero''  with ``nonzero'', at the expense of 
$m$ being a prime power with a larger lower bound.

\begin{proposition}
\label{prop:ECPP2variant}
Let $E/\Q$ be an elliptic curve, let $p$ be a prime, let
$N$ be an odd positive integer prime to $p\disc(E)$,
and let $P\in E(\Q)$.
Suppose $b$ is a positive integer such that
$p^b > (\sqrt{N/3} + 1)^2$ and 
$p^b P$ is zero mod~$N$ and
$p^{b-1} P$ is nonzero mod~$N$.
Then $N$ is prime.
\end{proposition}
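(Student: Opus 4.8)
The plan is to prove the contrapositive by a Hasse-bound argument carried out modulo a well-chosen prime-power divisor of $N$. Assume $N$ is composite, and write the coprime integer coordinates of $P' := p^{b-1}P$ and $P'' := p^b P$ as $P' = [x',y',z']$ and $P'' = [x'',y'',z'']$. Since $P''$ is zero mod $N$ we have $N \mid z''$, hence $\ell^{\ord_\ell N} \mid z''$ for every prime $\ell \mid N$, so $P''$ is zero mod $\ell^{\ord_\ell N}$ for all such $\ell$. Since $P'$ is nonzero mod $N$ we have $N \nmid z'$, so by unique factorization there is a prime $\ell \mid N$ with $\ell^{\ord_\ell N} \nmid z'$; fix such an $\ell$ and set $e := \ord_\ell N \ge 1$. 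Because $N$ is odd and prime to $p\disc(E)$, the prime $\ell$ is odd, satisfies $\ell \ne p$, and is a prime of good reduction for $E$.

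Next I would work in the finite abelian group $E(\Z/\ell^e\Z)$. For a prime $\ell$ of good reduction the projective Weierstrass group law makes $E(\Z/\ell^e\Z)$ an abelian group, compatibly with the coordinate computations performed over $\Z/N\Z$ (so the reduction mod $\ell^e$ of the triple obtained for $2^iP$ over $\Z/N\Z$ is the corresponding multiple of $P$ in $E(\Z/\ell^e\Z)$), and the reduction map $\pi\colon E(\Z/\ell^e\Z)\to \FF_\ell$-points $E(\FF_\ell)$ is a surjective homomorphism whose kernel has order $\ell^{e-1}$ and is annihilated by a power of $\ell$. By the choices above, $p^b\cdot(P\bmod\ell^e)=P''\bmod\ell^e=0$ while $p^{b-1}\cdot(P\bmod\ell^e)=P'\bmod\ell^e\ne0$ in $E(\Z/\ell^e\Z)$, so the order of $P\bmod\ell^e$ is a divisor of $p^b$ not dividing $p^{b-1}$, hence exactly $p^b$. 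Its cyclic span has order $p^b$, which is prime to $\#\ker\pi=\ell^{e-1}$ since $p\ne\ell$; therefore $\pi$ is injective on this span and $P\bmod\ell$ has order exactly $p^b$ in $E(\FF_\ell)$. In particular $p^b$ divides $\#E(\FF_\ell)$, so the Hasse bound gives $p^b\le\#E(\FF_\ell)\le(\sqrt\ell+1)^2$.

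To finish, I would bound $\ell$. The integer $N/\ell$ is at least $2$ (otherwise $N=\ell$ is prime), and it divides the odd number $N$, hence is odd, hence at least $3$; thus $\ell\le N/3$. Since $t\mapsto(\sqrt t+1)^2$ is increasing, $p^b\le(\sqrt{N/3}+1)^2$, contradicting the hypothesis $p^b>(\sqrt{N/3}+1)^2$. Therefore $N$ is prime.

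The delicate point, and the one I expect to be the main obstacle, is that the hypothesis only gives that $P'$ is nonzero mod $N$, which does \emph{not} imply that $P'$ is nonzero mod $\ell$ for any prime $\ell\mid N$ (only nonzero mod $\ell^{\ord_\ell N}$); this is precisely why one descends through $E(\Z/\ell^e\Z)$ rather than directly through $E(\FF_\ell)$, and then uses $p\ne\ell$ to discard the $\ell$-power kernel of reduction. The remaining ingredients — compatibility of the projective group law with reduction at primes of good reduction, the order of $\ker\pi$, the Hasse bound, and the elementary estimate $\ell\le N/3$ — are standard; in the degenerate case $e=1$ the argument collapses to the classical one underlying Proposition~\ref{prop:ECPP2}.
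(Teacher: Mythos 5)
Your argument is correct and follows essentially the same route as the paper's proof: both select a prime power $\ell^{e}$ exactly dividing $N$ modulo which $p^{b-1}P$ remains nonzero, exploit the fact that the kernel of $E(\Z/\ell^{e}\Z)\to E(\F_\ell)$ is an $\ell$-group of order prime to $p$ in order to transfer the order $p^{b}$ down to $E(\F_\ell)$, and conclude with the Hasse bound combined with the elementary estimate $\ell\le N/3$. The one tacit step you share with the paper's own proof is the identification of ``$\ell^{e}$ divides the $z$-coordinate'' with ``equals the identity of $E(\Z/\ell^{e}\Z)$'' (strictly, a point of $E(\Q)$ in the kernel of reduction mod $\ell$ has $v_\ell(z)=3v_\ell(x)$, so $\ell^{e}\mid z$ forces only $\ell^{\lceil e/3\rceil}\mid x$); this is repaired by noting that multiplication by $p$ preserves the $\ell$-adic filtration on the kernel of reduction, and it does not distinguish your argument from theirs.
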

\begin{proof}
Since $p^{b-1} P$ is nonzero mod~$N$, there are a prime divisor $q$ of $N$ and
a positive integer $r$ such that $q^r$ exactly divides $N$ and $p^{b-1} P$ is nonzero mod $q^r$. 
Let $E_1(\Z/q^r\Z)$ denote the kernel of the reduction map
$E(\Z/q^r\Z) \to E(\F_q)$.
It follows, for example, from  
\cite[Thm.~4.1]{Milne} that $E_1(\Z/q^r\Z)$ is a $q$-group.
Let $P' \in E(\Z/q^r\Z)$ be the reduction of $P$ mod $q^r$ and let $P''$ be the image of $P'$ in $E(\F_q)$.
If $p^{b-1} P'' = 0$ then $p^{b-1} P' \in E_1(\Z/q^r\Z)$, so $p^{b-1} P'$ has order a power of $q$. But by assumption it has order $p$, which is prime to $N$. This is a contradiction, so $P''$ has order $p^b$. 
If $N$ were composite, then $q \le N/3$ since $N$ is odd, so by the Hasse bound,
$$
p^b \le |E(\F_q)| \le (\sqrt{q} + 1)^2 \le (\sqrt{N/3} + 1)^2,
$$
contradicting the hypothesis that $p^b > (\sqrt{N/3} + 1)^2$.
\end{proof}

\subsection{Complex multiplication and Frobenius endomorphism}
For any number field $F$, let $\O_F$ denote its ring of integers.
If $E$ is an elliptic curve over a field~$K$, and $\Omega_K$ is the space of
holomorphic differentials on $E$ over $K$, then $\Omega_K$ is a one-dimensional
$K$-vector space, and there is a canonical ring homomorphism
\begin{equation}
\label{endhom}
\End_K(E) \to \End_K(\Omega) = K.
\end{equation}
Suppose now that $E$ is an elliptic curve over an imaginary quadratic field $K$,
and that $E$ has complex multiplication (CM) by $\O_K$, meaning that $\End_K(E) \simeq \O_K$.
Then the image of the map in \eqref{endhom} is $\O_K$. Let
$\psi : \O_K \to \End_K(E)$ denote the inverse map.
Suppose that $\frakp$ is a prime ideal of $K$ at which $E$ has good reduction 
and let $\tilde{E}$ denote the reduction of $E$ mod $\frakp$. Then the composition
$$
\O_K \isom \End_K(E) \hookrightarrow \End_{\O_K/\frakp}(\tilde{E}),
$$
where the first map is $\psi$ and the second is induced by reduction mod $\frakp$,
gives a canonical embedding 
\begin{equation}
\label{canonident}
 \O_K\hookrightarrow\End(\tilde{E}).
\end{equation}
The Frobenius endomorphism of $\tilde{E}$ is  
$(x,y) \mapsto (x^{q},y^{q})$ where $q=\norm_{K/\Q}(\frakp)$;
under the  embedding in \eqref{canonident}, the Frobenius endomorphism is 
the image of a particular generator $\pi$ of the (principal) ideal $\frakp$.
By abuse of notation, we say that the Frobenius endomorphism is $\pi$.

\section{Main Theorem}

In this section we state and prove our main result, Theorem \ref{mainthm}, 
which gives  a necessary and sufficient condition for the primality of
the numbers $J_k$.

Fix a particular square root of ${-7}$ and let $K = \QQ(\sqrt{-7})$.
Let
$$
\alpha = \frac{1+\sqrt{-7}}{2} \in \O_K,
$$
and for each positive integer $k$, let
$$
j_k = 1 + 2\alpha^k \in\Z[\alpha] \qquad\text{and} \qquad J_k = \norm_{K/\QQ}(j_k) = j_k\bar{j_k} \in \NN.
$$
Note that $J_k$ is prime in $\ZZ$ if and only if $j_k$ is prime in~$\OK$.
Note also that $\norm_{K/\QQ}(\alpha) = \alpha\bar{\alpha} = 2$.

Recall the family of elliptic curves $E_a$ defined by  \eqref{twist_weierstrass}.
Lemma \ref{divislemma} below shows that $J_k$ is composite if
$k\equiv 0 \pmod{8}$ or
$k\equiv 6 \pmod{24}$,
so we omit these cases from our primality criterion.
For each remaining value of $k$, 
Table~\ref{table:twistspts} lists the twisting parameter $a$ and the point $P_a\in E_a(\Q)$ we associate to $k$.
For each of these $a$, the elliptic curve 
$E_a$ has rank one over $\Q$, and the point $P_a$ is a generator for $E_a(\Q)$ modulo torsion.
 
\begin{table}[ht]
  \caption{The twisting parameters $a$ and points $P_a$}\label{table:twistspts}
    \begin{tabular}{ l r r  }
    $k$  & \hspace{40pt}$a$ & \hspace{70pt}$P_a$   \\ 
    \hline
$k \equiv 0$ or $ 2 \pmod{3}$     & $-1$ & $(1,8)$  \\ 
$k \equiv 4,7,13,22   \pmod{24}$  & $-5$ & $(15,50)$  \\
$k \equiv 10  \pmod{24}$   & $-6$ & $(21,63)$  \\
$k \equiv  1,19,49,67 \pmod{72}$       
         & $-17$ & $(81,440)$  \\
$k \equiv  25, 43 \pmod{72}$       
         & $-111$ & $(-633, 12384)$  \\\hline
     \end{tabular}
\end{table}

\begin{theorem}
\label{mainthm}
Fix $k > 1$ such that $k \not\equiv 0 \pmod{8}$ and $k \not\equiv 6 \pmod{24}$.
Let $P_a \in E_a(\Q)$ be as in Table~\ref{table:twistspts} (depending on $k$).
The following are equivalent:
\begin{enumerate}
\item $2^{k+1}P_a$ is zero mod $J_k$ and $2^kP_a$ is strongly nonzero mod $J_k$;
\item  $J_k$ is prime.
\end{enumerate} 
\end{theorem}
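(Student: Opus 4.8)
The plan is to prove the two implications separately.

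\emph{(i) $\Rightarrow$ (ii).} This is almost immediate from Proposition~\ref{prop:ECPP2}, once two preliminary facts are recorded. First, $J_k$ is coprime to $\disc(E_a)=-2^{12}7^3a^6$: the recurrence shows $J_k$ is always odd, a short computation in $\O_K/7\O_K$ shows $7\nmid J_k$ for every $k$, and for each row of Table~\ref{table:twistspts} one checks (a finite computation with the recurrence, cf.\ Lemma~\ref{divislemma}) that $\gcd(J_k,a)=1$ on the corresponding congruence class of $k$. Second, from $|\alpha^k+\bar\alpha^k|\le 2\cdot 2^{k/2}$ we get $J_k=2^{k+2}+O(2^{k/2})$, so $2^{k+1}>(J_k^{1/4}+1)^2$ for every $k\ge 2$; the inequality is tight at $k=2$ (there $8>7.95\ldots$), which is exactly why $k>1$ is assumed. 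Granting these, apply Proposition~\ref{prop:ECPP2} with $N=J_k$, $E=E_a$, $P=P_a$ and $m=2^{k+1}$: the only prime dividing $m$ is $2$, and ``$mP$ is zero mod $N$'' and ``$(m/2)P$ is strongly nonzero mod $N$'' are precisely the two hypotheses in~(i). Hence $J_k$ is prime.

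\emph{(ii) $\Rightarrow$ (i).} Assume $J_k$ is prime. Then $J_k>7$, and $J_k\nmid a$ (since $J_k>|a|$ in every row of Table~\ref{table:twistspts} with $|a|>1$), so $j_k$ is a degree-one prime of $\O_K$ with $\O_K/(j_k)\cong\FF_{J_k}$ and $E_a$ has good reduction at $j_k$. By the theory recalled in \S\ref{notation}, the Frobenius endomorphism of $E_a\bmod j_k$ is a generator of the ideal $(j_k)$ under the embedding $\O_K\hookrightarrow\End(E_a\bmod j_k)$, hence equals $\pm j_k$ since $\O_K^{\times}=\{\pm1\}$. The first role of the twisting parameter is to pin down this sign. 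Since $E_a$ is the quadratic twist by $a$ of the reference curve $E_1\colon y^2=x^3-35x-98$, the Frobenius of $E_a\bmod j_k$ is $\bigl(\tfrac{a}{J_k}\bigr)$ times that of $E_1\bmod j_k$; the sign attached to $E_1$ is governed by a congruence on $J_k$, and by quadratic reciprocity $\bigl(\tfrac{a}{J_k}\bigr)$ depends only on $J_k$ modulo a fixed integer, hence only on $k$ modulo a fixed period. One checks, class by class, that the entry of Table~\ref{table:twistspts} attached to each $k$ makes the Frobenius equal $+j_k=1+2\alpha^k$. Then $E_a(\FF_{J_k})=\ker\psi(2\alpha^k)$, and since $2\alpha^k=\alpha^{k+1}\bar\alpha$ with $(\alpha)$ and $(\bar\alpha)$ coprime primes above $2$, the argument of Lemma~\ref{structure1} gives an isomorphism of $\O_K$-modules
$$
E_a(\ZZ/J_k\ZZ)=E_a(\FF_{J_k})\;\simeq\;\O_K/(\alpha^{k+1})\oplus\O_K/(\bar\alpha)\;\simeq\;\ZZ/2^{k+1}\ZZ\oplus\ZZ/2\ZZ .
$$
This group has exponent $2^{k+1}$, so $2^{k+1}P_a$ is zero mod $J_k$: the first assertion of~(i) holds.

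It remains to show $2^kP_a$ is strongly nonzero mod $J_k$; as $J_k$ is prime this is the same as $2^kP_a\ne O$ in $E_a(\FF_{J_k})$, i.e.\ that $\bar P_a:=P_a\bmod j_k$ has order $2^{k+1}$, i.e.\ that $\bar P_a$ generates the cyclic summand $\O_K/(\alpha^{k+1})$. Equivalently, $\bar P_a$ must avoid the index-two subgroup $\{Q\in E_a(\FF_{J_k}):2^kQ=O\}=\psi(\alpha)\bigl(E_a(\FF_{J_k})\bigr)$. Here the choice of $P_a$ enters. Because $-7\equiv1\pmod 8$ the prime $2$ splits in $K$, so $\psi(\alpha)$ is defined over the completion $K_{(j_k)}$; having degree $2$, which is prime to $J_k$, it induces an automorphism of the formal group of $E_a$ over the ring of integers of $K_{(j_k)}$, whence $E_a(K_{(j_k)})/\psi(\alpha)E_a(K_{(j_k)})\cong E_a(\FF_{J_k})/\psi(\alpha)E_a(\FF_{J_k})\cong\ZZ/2\ZZ$. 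Thus it suffices to show that $P_a$ has nonzero image in $E_a(K_{(j_k)})/\psi(\alpha)E_a(K_{(j_k)})$; via the descent homomorphism attached to the $2$-isogeny $\psi(\alpha)$, this image is the value at $(j_k)$ of a quadratic character of an explicit element of $K^{\times}$ built from $x(P_a)$ and the $x$-coordinate of the generator of $E_a[\alpha]$, so (again using reciprocity) the condition is a quadratic-residue condition depending only on $k$ modulo a fixed period. One verifies, class by class in Table~\ref{table:twistspts}, that the listed $P_a$ — which satisfies $P_a\notin 2E_a(\QQ)$ — meets it; this is the step that forces the modulus $72$.

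I expect this last step to be the main obstacle: unlike the exponent bound, excluding $\bar P_a\in\psi(\alpha)E_a(\FF_{J_k})$ admits no soft argument and drives the delicate, congruence-class-by-class choice of the pairs $(a,P_a)$ in Table~\ref{table:twistspts}. The determination that the Frobenius is $+j_k$ rather than $-j_k$ is of the same nature and of comparable difficulty, and in practice the two residue conditions are handled together.
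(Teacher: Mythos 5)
Your proposal is correct and follows essentially the same route as the paper: Proposition~\ref{prop:ECPP2} with $m=2^{k+1}$ for (i)$\Rightarrow$(ii), and for (ii)$\Rightarrow$(i) the determination of the Frobenius sign via the quadratic character of the twisting parameter (the paper quotes Stark's explicit formula, your ``congruence on $J_k$'' for the reference curve), the resulting module structure $E_a(\OK/(j_k))\simeq\OK/(2\alpha^k)$, and the exclusion of $\bar P_a$ from $\alpha E_a$ by a quadratic-residue/descent condition checked class by class --- exactly the content of the paper's sets $S_a$ and $T_a$. The only place your sketch understates the work is the phrase ``again using reciprocity'': for $a=-1,-5,-17$ the relevant element $\delta_a$ involves $\alpha$ itself, so the symbol $\left(\frac{\alpha}{j_k}\right)$ cannot be read off from periodicity alone and the paper must evaluate it (Lemma~\ref{alpha_legendre}) by the product formula together with explicit local computations at the two primes above $2$ (Hensel's lemma at $\alpha$, a Hilbert-symbol calculation at $\bar\alpha$).
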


\begin{remark}\label{rem:nonzero}
Applying Proposition \ref{prop:ECPP2variant} with $N=J_k$, $p=2$, and $b=k+1$,
we can add an equivalent condition in Theorem~\ref{mainthm} as long as $k \ge 6$, namely:
\begin{enumerate}
\item[(iii)] $2^{k+1}P_a$ is zero mod $J_k$ and $2^kP_a$ is nonzero mod $J_k$.
\end{enumerate} 
\end{remark}

We shall prove Theorem~\ref{mainthm} via a series of lemmas, but let us first outline the proof.
One direction is easy: since $2^{k+1} > (J_k^{1/4}+1)^2$ for all $k>1$, if~(i) holds then so does (ii), by Proposition~\ref{prop:ECPP2} 
(where the hypothesis $\gcd(J_k,\disc(E_a))=1$ holds by Lemma~\ref{divislemma} below).

Now fix $a$ and $P_a$ as in Table~\ref{table:twistspts}, and let
$\tilde{P}_a$ denote the reduction of $P_a$ modulo~$j_k$.
We first compute a set $S_a$ such that if $k\in S_a$ and $j_k$ is prime, then   
$E_a(\OK/(j_k))\simeq\OK/(2\alpha^k)$ as $\OK$-modules.
We then compute a set $T_a$ such that if $k\in T_a$ and $j_k$ is prime, then
$\tilde{P}_a$ does not lie in $\alpha E_{a}(\OK/(j_k))$ if and only if $k \in T_{a}$ (note that $\alpha\in\OK\hookrightarrow\End(E_a)$).
For $k \in S_a \cap T_a$, the point $\tilde{P}_a$ has order $2^{k+1}$ whenever $J_k$ is prime.

We now fill in the details.
Many of the explicit calculations below were performed with the assistance of the Sage computer algebra system \cite{sage}.
\subsection{The linear recurrence sequence $J_k$}
As noted in the introduction, the sequence $J_k$ satisfies the linear recurrence relation
\begin{equation}
\label{recurrence}
J_{k+4} = 4J_{k+3}-7J_{k+2}+8J_{k+1}-4J_k.
\end{equation}
We now prove this, and also note some periodic properties of this sequence.
See \cite{epsw} or \cite[Ch.\ 6]{lnff} for basic properties of linear recurrence sequences.

\begin{definition}
\label{periodicdef}
We call a sequence $a_k$ (purely) \emph{periodic} if there exists an
integer $m$ such that $a_k=a_{k+m}$ for all $k$.  The minimal such $m$ is
the \emph{period} of the sequence.
\end{definition}

\begin{lemma}
\label{lemma:recurrence}
The sequence $J_k$ satisfies \eqref{recurrence}.
If $p$ is an odd prime and $\frakp\subset\OK$ is a prime ideal above $(p)$, then
the sequence $J_k\bmod p$ is periodic, with period equal to the least common multiple of the orders of $2$ and $\alpha$ in~$(\OK/\frakp)^*$.
\end{lemma}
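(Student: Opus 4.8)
The plan is to prove the recurrence and periodicity in two independent pieces. First I would establish the recurrence \eqref{recurrence}. Since $J_k = 1 + 2(\alpha^k + \bar\alpha^k) + 2^{k+2}$, the sequence $J_k$ is a $\ZZ$-linear combination of the sequences $1$, $\alpha^k$, $\bar\alpha^k$, and $2^k$ (with fixed coefficients). Each of these is annihilated by a linear operator: $1$ by $(X-1)$, the pair $\alpha^k,\bar\alpha^k$ by the minimal polynomial $X^2 - X + 2$ of $\alpha$ over $\QQ$ (using $\alpha\bar\alpha = 2$ and $\alpha + \bar\alpha = 1$), and $2^k$ by $(X-2)$. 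Hence $J_k$ is annihilated by the product
\[
(X-1)(X-2)(X^2 - X + 2) = X^4 - 4X^3 + 7X^2 - 8X + 4,
\]
which is exactly the characteristic polynomial of \eqref{recurrence}; expanding and reading off coefficients gives $J_{k+4} = 4J_{k+3} - 7J_{k+2} + 8J_{k+1} - 4J_k$. (One should double-check the sign: the recurrence coming from a monic characteristic polynomial $X^4 - c_3X^3 - c_2X^2 - c_1X - c_0$ is $J_{k+4} = c_3 J_{k+3} + c_2 J_{k+2} + c_1 J_{k+1} + c_0 J_k$, so here $c_3 = 4$, $c_2 = -7$, $c_1 = 8$, $c_0 = -4$, matching the stated relation.)

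For the periodicity statement, fix an odd prime $p$ and a prime ideal $\frakp$ above $(p)$. The key observation is that reduction mod $\frakp$ gives a ring homomorphism $\ZZ[\alpha] \to \OK/\frakp$, and $J_k \bmod p$ is the image in $\FF_p \subseteq \OK/\frakp$ of $j_k\bar j_k = (1 + 2\alpha^k)(1 + 2\bar\alpha^k) \in \ZZ[\alpha]$. Let $e$ be the order of $2$ in $(\OK/\frakp)^*$ and $f$ the order of $\alpha$ in $(\OK/\frakp)^*$ (both are well-defined since $2$ and $\alpha$ are units mod $\frakp$: $\norm(\alpha) = 2$ is prime to the odd prime $p$, and likewise $2$ is a unit). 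Set $m = \operatorname{lcm}(e,f)$. Then $\alpha^{k+m} \equiv \alpha^k$ and $2^{k+m} \equiv 2^k$ modulo $\frakp$ (using $\alpha^m \equiv 1$ and $2^m \equiv 1$), hence $\bar\alpha^{k+m} \equiv \bar\alpha^k$ as well (apply the nontrivial automorphism of $\OK$, which permutes the primes above $p$ but the congruence $\alpha^m \equiv 1 \pmod{\bar\frakp}$ follows from conjugating, or simply note $\bar\alpha = 2/\alpha$ so $\bar\alpha^m = 2^m/\alpha^m \equiv 1$). Therefore $J_{k+m} \equiv J_k \pmod{\frakp}$, and since both sides are rational integers this gives $J_{k+m} \equiv J_k \pmod p$. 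This proves the sequence is purely periodic with period dividing $m$.

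It remains to show the period is exactly $m$, not a proper divisor. Here the cleanest argument is to work in $\OK/\frakp$ directly. Suppose $J_{k+d} \equiv J_k \pmod p$ for all $k$ and some $d \mid m$; I want to conclude $e \mid d$ and $f \mid d$. The slickness is to recover $\alpha^k$ and $2^k$ separately from the values $J_k, J_{k+1}, J_{k+2}, J_{k+3}$ via the (invertible, for $p$ not dividing a certain discriminant) linear change of basis expressing a solution of the recurrence in terms of the roots $1, 2, \alpha, \bar\alpha$ of the characteristic polynomial — these roots are distinct in $\overline{\FF_p}$ provided $p \nmid \disc(X^4 - 4X^3 + 7X^2 - 8X + 4)$, and the small bad primes must be checked by hand. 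Given that, periodicity of $J_k$ with period $d$ forces periodicity of the coordinate sequences $2^k$ and $\alpha^k$ with period $d$, whence $e \mid d$ and $f \mid d$, so $m \mid d$ and $d = m$. The main obstacle is precisely this last step: handling the finitely many primes $p$ dividing the discriminant of the characteristic polynomial (where the roots collide mod $p$ and the clean "extract the coordinates" argument breaks down), as well as making sure $\bar\alpha$ does not coincide with $1$ or $2$ mod $\frakp$ for small $p$; these require a short case check rather than a conceptual fix, but they are where the statement "period equals the lcm" could in principle fail and must be verified.
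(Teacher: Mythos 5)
Your proposal is correct and follows essentially the same route as the paper: both derive the recurrence from the factorization $x^4-4x^3+7x^2-8x+4=(x-1)(x-2)(x^2-x+2)$, and both obtain the exact period from the distinctness of the roots $1,2,\alpha,\bar{\alpha}$ modulo $\frakp$ together with the nonvanishing of the coefficients in the closed form for $J_k$, with $p=7$ handled by a separate check. The only difference is bookkeeping: where you sketch a direct Vandermonde inversion to recover the coordinate sequences $2^k$ and $\alpha^k$, the paper certifies the same hypotheses at one stroke by computing $\det\left(J_{i+j-1}\right)_{1\le i,j\le 4}=-2^{12}\cdot 7\not\equiv 0 \pmod p$ and citing Theorems 6.19 and 6.27 of Lidl--Niederreiter.
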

\begin{proof}
The characteristic polynomial of the linear recurrence in \eqref{recurrence} is
\[
f(x) = x^4-4x^3+7x^2-8x+4 = (x-1)(x-2)(x^2-x+2),
\]
whose roots are $1, 2, \alpha$, and $\bar{\alpha}$.  It follows that the sequences $1^k$, $2^k$, $\alpha^k$, and $\bar{\alpha}^k$, and any linear combination of these sequences, satisfy \eqref{recurrence}.  Thus $J_k$ satisfies \eqref{recurrence}.

One easily checks that the lemma is true for $p=7$, so assume $p\ne 7$.
Let $A$ be the $4\times 4$ matrix with $A_{i,j} = J_{i+j-1}$.
Then $\det A=-2^{12}\cdot7$ is nonzero mod $p$, hence its rows are linearly independent over $\Fp$.
It follows from Theorems 6.19 and 6.27 of \cite{lnff} that the sequence $J_k\bmod p$ is periodic, with
period equal to the lcm of the orders of the roots of $f$ in $\bar{\FF}_p^*$ (which we note are distinct).
These roots all lie in $\OK/\frakp\simeq \FF_{p^d}$, where $d\in\{1,2\}$ is the residue degree of~$\frakp$.
Since $\bar{\alpha} = 2/\alpha$, the order of $\bar{\alpha}$ in $(\OK/\frakp)^*$ divides the
lcm of the orders of $2$ and $\alpha$.  The lemma follows.
\end{proof}

When $p$ is an odd prime, let $m_p$ denote the period of the sequence $J_k\bmod p$.
Lemma~\ref{lemma:recurrence} implies that $m_p$ always divides $p^2-1$, and it divides $p-1$ whenever $p$ splits in $K$.

\begin{lemma}
\label{divislemma}
The following hold:
\begin{enumerate}
\item $J_k$ is divisible by $3$ if and only if $k \equiv 0 \pmod{8}$;
\item $J_k$ is divisible by $5$ if and only if $k \equiv 6 \pmod{24}$;
\item $J_k \equiv 2 \pmod{7}$ if $k \equiv 0 \pmod{3}$, and 
		$J_k \equiv 4 \pmod{7}$ otherwise;
\item for  $k>1$,
$J_k\equiv 3 \pmod{8}$ if $k$ is even, and $J_k\equiv 7 \pmod{8}$ if $k$ is odd;
\item $J_k$ is divisible by $17$ if and only if $k \equiv 54 \pmod{144}$;
\item $J_k$ is not divisible by $37$.
\end{enumerate}
\end{lemma}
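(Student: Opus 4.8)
The plan is to prove Lemma~\ref{divislemma} by reducing each assertion to a statement about the periodic sequence $J_k \bmod p$ for the relevant small prime $p$ (namely $p \in \{3,5,7,17,37\}$), and then verifying that statement by a finite computation governed by the period $m_p$ from Lemma~\ref{lemma:recurrence}. The point is that once we know the period $m_p$, the congruence class of $J_k \bmod p$ depends only on $k \bmod m_p$, so each claim becomes a check over a single period, which is a routine (if occasionally lengthy) calculation; I will present the conceptual skeleton and indicate the numbers, not grind through every residue.

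First I would dispose of parts~(iii) and~(iv) directly from the closed form $J_k = 1 + 2(\alpha^k + \bar\alpha^k) + 2^{k+2}$. For~(iii), work modulo the prime $(\sqrt{-7})$ above $7$; there $\alpha \equiv \bar\alpha \equiv 1/2 \equiv 4 \pmod 7$ (since $2\alpha = 1+\sqrt{-7} \equiv 1$), and $2^{k+2} \equiv 0$ is false—rather one computes $J_k \bmod 7$ using $\alpha + \bar\alpha = 1$, $\alpha\bar\alpha = 2$, so the sequence $t_k := \alpha^k + \bar\alpha^k$ satisfies $t_{k+1} = t_k - 2t_{k-1}$ with $t_0 = 2$, $t_1 = 1$; reducing mod $7$ gives a period-$3$ pattern for $t_k$, and combined with $2^{k+2}\bmod 7$ (period $3$) one reads off $J_k \equiv 2$ or $4 \pmod 7$ according to $k \bmod 3$. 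For~(iv), reduce mod $8$: for $k \ge 2$ one has $2^{k+2}\equiv 0\pmod 8$ (in fact for $k\ge 1$), so $J_k \equiv 1 + 2t_k \pmod 8$, and the recurrence for $t_k \bmod 4$ (which is what matters after multiplying by $2$) has a short period in $k$, yielding $J_k \equiv 3 \pmod 8$ for $k$ even and $7 \pmod 8$ for $k$ odd once $k > 1$.

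Next, parts~(i), (ii), (v), (vi) are each ``$p \mid J_k \iff k$ lies in a prescribed arithmetic progression.'' For these I would: (a) determine how $p$ splits in $K = \Q(\sqrt{-7})$ by evaluating the Legendre symbol $\left(\frac{-7}{p}\right)$, hence pin down the residue field $\O_K/\frakp$; (b) compute the order of $2$ and the order of $\alpha$ in $(\O_K/\frakp)^*$, whose lcm is $m_p$ by Lemma~\ref{lemma:recurrence}; (c) within one period $k = 1, \dots, m_p$, determine exactly which $k$ give $J_k \equiv 0 \pmod p$. Concretely: $3$ splits in $K$ (since $-7 \equiv 2 \pmod 3$ is a nonsquare, so actually $3$ is inert—one checks $x^2-x+2 \bmod 3$ is irreducible), giving residue degree $2$, and one finds $m_3 = 8$ with the unique zero at $k \equiv 0$; $5$ is inert as well (since $x^2 - x + 2 \bmod 5$ is irreducible), $m_5 = 24$, with the unique zero at $k \equiv 6$; $17$ splits (as $-7$ is a square mod $17$), $m_{17} = 144$, with the unique zero at $k \equiv 54$; and for $37$ one computes $m_{37}$ and checks that $J_k \bmod 37$ is never $0$ over a full period, i.e.\ $j_k = 1 + 2\alpha^k$ is never divisible by a prime above $37$. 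Alternatively, and perhaps more cleanly, $p \mid J_k$ iff some prime $\frakp \mid (p)$ divides $j_k = 1 + 2\alpha^k$, i.e.\ $\alpha^k \equiv -1/2 \pmod{\frakp}$; this is solvable for $k$ iff $-1/2$ lies in the cyclic subgroup generated by $\alpha$, and then the solution set is a single congruence class modulo the order of $\alpha$, which explains both the ``single progression'' shape of (i), (ii), (v) and the nonexistence in (vi).

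The main obstacle is simply organizing and carrying out the group-order computations in $(\O_K/\frakp)^*$ correctly for $p = 17$ (where $m_{17} = 144$ is the largest modulus and one must be careful that the zero occurs at $k \equiv 54$ and nowhere else in the period) and confirming the negative statement for $p = 37$, since a nonexistence claim requires either a full-period sweep or a clean structural argument that $-1/2 \notin \langle \alpha \rangle \subseteq (\O_K/\frakp)^*$ for every $\frakp$ above $37$. I would lean on the structural formulation to keep the $37$ case short, and I would double-check all periods against the closed form and the recurrence using a computer algebra system, as the paper already signals. None of the steps is deep; the work is bookkeeping modulo small primes.
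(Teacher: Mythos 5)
Your proposal follows essentially the same route as the paper: use Lemma~\ref{lemma:recurrence} to pin down the periods $m_3=8$, $m_5=24$, $m_7=3$, $m_{17}=144$, $m_{37}=36$, reduce each divisibility claim to a finite check over one period, and handle (iii) and (iv) directly from the closed form $J_k=1+2(\alpha^k+\bar\alpha^k)+2^{k+2}$ (the paper's proof of (iv) is exactly your computation of $\alpha^k+\bar\alpha^k \bmod 4$). Your reformulation of $p\mid J_k$ as $\alpha^k\equiv -1/2 \pmod{\frakp}$ for some $\frakp\mid(p)$ is a nice structural gloss that the paper only hints at in its ``alternative proof'' remark for (i) and (ii).

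One factual slip: $17$ does \emph{not} split in $K$. Since $\left(\frac{-7}{17}\right)=\left(\frac{10}{17}\right)=-1$ (equivalently, $x^2-x+2$ is irreducible mod $17$), the prime $17$ is inert, the residue field is $\F_{17^2}$, and $m_{17}=144$ divides $17^2-1=288$. Your two claims are in fact mutually inconsistent, since $m_p\mid p-1$ whenever $p$ splits and $144\nmid 16$; the computation would have caught this, and the period $144$ and the residue class $k\equiv 54\pmod{144}$ are correct as stated, so nothing downstream is affected. Also note that for a split prime your ``single congruence class'' description should in principle allow a union of two classes (one for each conjugate prime above $p$); this is harmless here because the relevant primes $3$, $5$, $17$ are inert and $37$ is the nonexistence case, where you correctly quantify over all $\frakp$ above $37$.
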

\begin{proof}
Lemma \ref{lemma:recurrence} allows us to compute the periods $m_3=8$, $m_5=24$, $m_7=3$, 
$m_{17} = 144$, and $m_{37} = 36$.  It then suffices to check, for $p=3, 5, 17$, and 37, 
when $J_k\equiv 0\pmod{p}$ for $1\le k\le m_p$, and to determine the values of
$J_k\pmod{7}$ for $1\le k\le 3$.

It is easy to check that $\alpha^{k} + \bar{\alpha}^{k} \equiv 3 \pmod{4}$
for odd $k>1$, and  $\alpha^{k} + \bar{\alpha}^{k} \equiv 1 \pmod{4}$ otherwise.
Since $J_k = 1 + 2(\alpha^k + \bar{\alpha}^k) + 2^{k+2}$, we have (iv).

As an alternative proof for one direction of (i,ii), note that $\alpha$ and $\bar{\alpha}$ each has order $8$ in $(\OK/(3))^\times$. Hence if $k \equiv 0 \pmod{8}$, then $J_k = 1 + 2(\alpha^k + \bar{\alpha}^k) + 2^{k+2} \equiv 1 + 2(1 + 1) + 1 \equiv 0 \pmod{3}$. Similarly, $\alpha^6 \equiv 2 \equiv \bar{\alpha}^6 \pmod{5}$, so 
$J_k \equiv 1 + 2(4) + 1 \equiv 0 \pmod{5}$ when $k \equiv 6 \pmod{24}$.

\end{proof}

\subsection{The set $\boldsymbol{S_a}$}
For each squarefree integer $a$ we define the set of integers
$$
S_a := \Bigl\{ k > 1 : \left(\frac{a}{J_k}\right)\left(\frac{j_k}{\sqrt{-7}}\right) = 1 \Bigr\},
$$ 
where $\left(\frac{\,\,\,}{\,\,\,}\right)$ denotes the (generalized) Jacobi symbol.

If $j_k$ is prime in $\OK$, then 
the Frobenius endomorphism of $E_a$ over the finite field $\OK/(j_k)$ corresponds to either $j_k$ or $-j_k$.
For elliptic curves over~$\Q$ with complex multiplication, one can easily determine which is the case.

\begin{lemma}
\label{structure1}
Suppose $a$ is a squarefree integer, $k > 1$, and $j_k$ is prime in $\OK$.
Then:
\begin{enumerate}
\item $k \in S_a$ if and only if
the Frobenius endomorphism of $E_a$ over the finite field $\OK/(j_k)$ is $j_k$;
\item if $k \in S_a$, then $E_a(\OK/(j_k)) \simeq \OK/(2\alpha^k)$ as $\OK$-modules.
\end{enumerate}
\end{lemma}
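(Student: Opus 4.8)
The plan is to work over the residue field $\FF_{J_k} = \OK/(j_k)$, which is a prime field since $j_k\bar{j_k} = J_k$ is prime and $j_k$ is prime in $\OK$ (note the complementary prime $\bar{j_k}$ is distinct from $j_k$, because $J_k = 11, 23, 67, \ldots$ is odd and $> 7$, so $(p)=(j_k)(\bar{j_k})$ splits). Write $\pi$ for the Frobenius endomorphism of $\tilde{E}_a := E_a \bmod j_k$; by the discussion after \eqref{canonident}, under the embedding $\OK \hookrightarrow \End(\tilde{E}_a)$ the element $\pi$ is a generator of the ideal $(j_k)$, hence $\pi = \pm j_k$ (the units in $\OK$ being $\pm 1$ since $K \neq \QQ(i), \QQ(\sqrt{-3})$). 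The task in part (i) is to pin down the sign, and the standard tool is a quadratic residue / Jacobi symbol computation comparing the two quadratic twists.

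For part (i), I would proceed as follows. The curve $E_{a}$ is the quadratic twist of $E_{-1}$ (equivalently $E_1$) by $a$, and more to the point, for a fixed base curve $E = E_1$ the twist $E_a$ over $\FF_{J_k}$ is isomorphic to $E$ over $\FF_{J_k}$ precisely when $a$ is a square mod $J_k$, i.e.\ when $\left(\frac{a}{J_k}\right) = 1$; when $a$ is a nonsquare the traces of Frobenius differ by a sign. So it suffices to (a) identify, for the base curve, whether Frobenius is $+j_k$ or $-j_k$ in terms of the symbol $\left(\frac{j_k}{\sqrt{-7}}\right)$ — this is where the CM theory enters: for a CM elliptic curve over $\QQ$ with CM by $\OK$, the Frobenius at a split prime $\frakp = (\pi)$ is the generator $\pi$ normalized by a congruence condition mod the conductor, and the relevant Hecke-character / quartic-or-quadratic-residue symbol here is exactly $\left(\tfrac{\pi}{\sqrt{-7}}\right)$ — and then (b) multiply in the twisting factor $\left(\frac{a}{J_k}\right)$. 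Combining (a) and (b), Frobenius equals $j_k$ iff $\left(\frac{a}{J_k}\right)\left(\frac{j_k}{\sqrt{-7}}\right) = 1$, which is the defining condition for $k \in S_a$. I expect the normalization in step (a) — getting the precise statement that the Frobenius of the chosen model $E_1: y^2 = x^3 - 35x - 98$ at $(j_k)$ is $j_k$ exactly when $\left(\frac{j_k}{\sqrt{-7}}\right)=1$, with the correct congruence conventions for the Jacobi symbol in $\OK$ — to be the main obstacle, and the one place a genuine (if short) computation with the CM structure, rather than a formal argument, is needed; it may be cleanest to verify it directly for small $k$ and then invoke that the Hecke character is determined by its values on a set of generators together with the recurrence periodicity from Lemma~\ref{lemma:recurrence}.

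For part (ii), assume $k \in S_a$, so Frobenius is $\pi = j_k = 1 + 2\alpha^k$ acting on $\tilde{E}_a$ over $\FF_{J_k}$. Then $E_a(\OK/(j_k)) = E_a(\FF_{J_k}) = \ker(\pi - 1)$ acting on $\tilde{E}_a(\overline{\FF}_{J_k})$, since an $\overline{\FF}_{J_k}$-point is $\FF_{J_k}$-rational iff it is fixed by $\pi$. Now $\pi - 1 = 2\alpha^k$ as an element of $\OK \hookrightarrow \End(\tilde{E}_a)$, so $E_a(\FF_{J_k}) = \ker(\text{mult. by } 2\alpha^k)$. For a CM elliptic curve over a field where it has good (ordinary) reduction, the $\OK$-module $\tilde{E}_a[\mathfrak{m}]$ of $\mathfrak{m}$-torsion is free of rank one over $\OK/\mathfrak{m}$ for any nonzero ideal $\mathfrak{m}$ of $\OK$ — this is the standard fact that the (reduced) Tate module is free of rank one over $\OK \otimes \ZZ_\ell$ — applied to $\mathfrak{m} = (2\alpha^k)$ this gives $E_a(\FF_{J_k}) = \tilde{E}_a[(2\alpha^k)] \simeq \OK/(2\alpha^k)$ as $\OK$-modules, which is exactly (ii). The only thing to check is that $\tilde{E}_a$ has ordinary (good) reduction at $j_k$: good reduction holds because $j_k$ lies over a prime $p \nmid 2\cdot 7 \cdot a$ (as $\left(\frac{a}{J_k}\right)$ appears in a defining condition, $\gcd(a, J_k) = 1$, and $J_k$ is coprime to $14$ by Lemma~\ref{divislemma}(iii),(iv)), and ordinarity holds because $(p)$ splits in $K$. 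This part I expect to be routine once part (i) is in hand.
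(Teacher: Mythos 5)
Your proposal is correct and follows essentially the same route as the paper: part (i) reduces to the formula $\left(\frac{a}{J_k}\right)\left(\frac{j_k}{\sqrt{-7}}\right)j_k$ for the Frobenius endomorphism, and part (ii) is the identification $E_a(\OK/(j_k)) = \ker(j_k-1) = \ker(2\alpha^k)\simeq \OK/(2\alpha^k)$. The one step you flag as the main obstacle --- normalizing the Frobenius of the base curve via $\left(\frac{j_k}{\sqrt{-7}}\right)$ and then twisting by $\left(\frac{a}{J_k}\right)$ --- is exactly what the paper obtains in one stroke by citing Theorem 1 of Stark \cite{stark}, which gives this explicit Frobenius formula for the curves $E_a$ with $D=-7$.
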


\begin{proof}
The elliptic curve  $E_{a}$ is the curve in Theorem 1 of \cite[p.~1117]{stark}, with $D=-7$ and $\pi=j_k$.
By \cite[p.~1135]{stark}, the Frobenius endomorphism of  $E_{a}$ over $\OK/(j_k)$ is
$$\left(\frac{a}{J_k}\right)\left(\frac{j_k}{\sqrt{-7}}\right)j_k \in \OK.$$
Part (i) then follows from the definition of $S_a$.
For (ii), note that (i) implies that if $k \in S_a$, then 
$$
E_{a}(\OK/(j_k)) \simeq \ker(j_k - 1) = \ker(2\alpha^k) \simeq  \OK/(2\alpha^k),
$$
which completes the proof.
\end{proof}

The next lemma follows directly from Lemma \ref{divislemma}(iv).

\begin{lemma}
\label{Jacobicomputations}
If $k > 1$, then
\begin{enumerate}
\item $\textstyle{\left(\frac{-1}{J_k}\right) = -1,}$
\item 
$
\left(\frac{2}{J_k}\right) = \left\{
	\begin{array}{rl}
	1 & \text{if $k$ is odd},\\ 
				-1 & \text{if $k$ is even.}
	\end{array}
\right.
$
\end{enumerate}
\end{lemma}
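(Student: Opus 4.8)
The plan is to reduce everything to the computation of the Jacobi symbols $\left(\frac{-1}{J_k}\right)$ and $\left(\frac{2}{J_k}\right)$ via the supplementary laws for the Jacobi symbol, using the congruence information on $J_k$ modulo $8$ supplied by Lemma~\ref{divislemma}(iv). First I would recall that since $k>1$, Lemma~\ref{divislemma}(iv) tells us $J_k$ is a positive odd integer with $J_k\equiv 3\pmod 8$ when $k$ is even and $J_k\equiv 7\pmod 8$ when $k$ is odd; in particular $J_k$ is odd and positive, so the Jacobi symbols $\left(\frac{-1}{J_k}\right)$ and $\left(\frac{2}{J_k}\right)$ are defined and the supplementary laws apply.

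For part~(i), the first supplementary law gives $\left(\frac{-1}{J_k}\right) = (-1)^{(J_k-1)/2}$, which depends only on $J_k\bmod 4$. Since both $3$ and $7$ are $\equiv 3\pmod 4$, in either parity case $(J_k-1)/2$ is odd, so $\left(\frac{-1}{J_k}\right) = -1$. For part~(ii), the second supplementary law gives $\left(\frac{2}{J_k}\right) = (-1)^{(J_k^2-1)/8}$, which depends only on $J_k\bmod 8$: it equals $1$ when $J_k\equiv \pm 1\pmod 8$ and $-1$ when $J_k\equiv\pm 3\pmod 8$. When $k$ is odd, $J_k\equiv 7\equiv -1\pmod 8$, giving $\left(\frac{2}{J_k}\right)=1$; when $k$ is even, $J_k\equiv 3\pmod 8$, giving $\left(\frac{2}{J_k}\right)=-1$. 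This matches the claimed formula.

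I do not anticipate any real obstacle here: this is a short deduction from the standard supplementary laws for the Jacobi symbol together with the already-established residue $J_k\bmod 8$. The only point requiring a moment's care is confirming that the hypotheses for the Jacobi symbol laws are met — namely that $J_k$ is a positive odd integer — which is immediate from the explicit formula $J_k = 1 + 2(\alpha^k+\bar\alpha^k) + 2^{k+2}$ and Lemma~\ref{divislemma}(iv). No knowledge of the factorization of $J_k$ is needed, since the Jacobi symbol (as opposed to the Legendre symbol) is defined for all odd moduli.
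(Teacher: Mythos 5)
Your proof is correct and follows exactly the route the paper intends: the paper simply states that the lemma ``follows directly from Lemma~\ref{divislemma}(iv)'', and your argument supplies precisely those details, applying the two supplementary laws for the Jacobi symbol to the residues $J_k\equiv 3$ or $7\pmod 8$. Nothing further is needed.
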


%\begin{proof}
%For  $k>1$,
%$J_k\equiv 3 \pmod{8}$ if $k$ is even, and $J_k\equiv 7 \pmod{8}$ if $k$ is odd.
%\end{proof}

We now explicitly compute the sets $S_a$ for the values of $a$ used in Theorem~\ref{mainthm}.

\begin{table}[ht]
\caption{The sets $S_a$}
\label{table:Sa}
\begin{tabular}{rrll}
$a$&\hspace{6pt}$m$&&$S_a = \{ k > 1 : k\bmod{m} \text{ is as below$\}$}$\\\hline
$-1$ & $3$ && $0, 2$\\
$-5$ & $24$ && $0,2,4,5,7,9,12,13,16,18,21,22,23$\\
$-6$ & $24$ && $3,7,9,10,11,12,13,17,20,22$\\
$-17$ & $144$ && $0,1,5,7,9,10,13,14,15,18,19,20,22,23,27,30,31,33,34,$\\
&&& $36,42,43,44,45,49,50,53,56,61,62,63,66,67,68,70,71,$\\
&&& $72,73,75,76,78,79,80,81,82,83,90,91,92,93,97,99,100,$\\
&&& $104,106,108,110,111,112,114,117,118,121,122,123,125,$\\
&&& $126,128,129,133,135,136,137,138,139,141,143$\\
$-111$ & $7$2 && $2,4,6,9,14,15,18,20,22,23,25,30,33,34,35,37,38,39,41,$\\
&&& $42,43,47,49,50,52,53,54,55,57,58,63,65,66,67,68,70$\\\hline
\end{tabular}
\end{table}

\begin{lemma}
\label{structure2}
For $a\in\{-1,-5,-6,-17,-111\}$ the sets $S_a$ are as in Table~\ref{table:Sa}.
\end{lemma}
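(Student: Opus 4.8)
The plan is to compute, for each $a\in\{-1,-5,-6,-17,-111\}$, the set $S_a$ defined by the condition $\left(\frac{a}{J_k}\right)\left(\frac{j_k}{\sqrt{-7}}\right) = 1$, and to show that membership in $S_a$ depends only on $k$ modulo the listed modulus $m$. The key observation is that each factor in the product is periodic in $k$ with a known period, so the product is periodic with period dividing the least common multiple of these periods, and one then verifies the claimed residues by a finite computation.

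First I would handle the Jacobi symbol $\left(\frac{a}{J_k}\right)$. By quadratic reciprocity for the Jacobi symbol, this can be rewritten in terms of $J_k$ modulo $4|a|$ (and modulo $8$ for the factor of $2$ when $a$ is even), together with sign conventions. For $a=-1$ and $a=-5$ I can invoke Lemma~\ref{Jacobicomputations} directly, which already expresses $\left(\frac{-1}{J_k}\right)$ and $\left(\frac{2}{J_k}\right)$ in terms of the parity of $k$; for $a = -6 = -1\cdot 2\cdot 3$, $a=-17$, and $a=-111 = -3\cdot 37$, I split the symbol multiplicatively and reduce each odd prime factor $\ell$ to a condition on $J_k \bmod \ell$ via reciprocity, which by Lemma~\ref{lemma:recurrence} is periodic in $k$ with period $m_\ell$ dividing $\ell^2-1$. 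Concretely $m_3 = 8$, $m_5 = 24$, $m_{17} = 144$, and $m_{37} = 36$ (the relevant periods already computed in the proof of Lemma~\ref{divislemma}), so $\left(\frac{a}{J_k}\right)$ is periodic with period dividing $\operatorname{lcm}$ of $8$ (parity/powers of two), $m_3$, $m_{17}$, $m_{37}$ as appropriate.

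Next I would handle the factor $\left(\frac{j_k}{\sqrt{-7}}\right)$, the quadratic residue symbol of $j_k = 1 + 2\alpha^k$ modulo the prime $\sqrt{-7}$ (lying over $7$) in $\O_K$. Since $\O_K/(\sqrt{-7}) \simeq \F_7$, this symbol is determined by $j_k \bmod \sqrt{-7}$, i.e. by $2\alpha^k \bmod 7$; and since $\alpha$ has finite multiplicative order modulo $\sqrt{-7}$ (dividing $6$), this factor is periodic in $k$ with period dividing $6$. So altogether the product defining $S_a$ is periodic in $k$ with period dividing $m$, where $m$ is the relevant least common multiple: $m = 3$ for $a=-1$ (here the parity contributions from $\left(\frac{-1}{J_k}\right)$ and the $\sqrt{-7}$-factor conspire to give period $3$), $m=24$ for $a=-5,-6$, $m=144$ for $a=-17$, and $m=72$ for $a=-111$. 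Having established periodicity with these moduli, the proof concludes by direct evaluation: for each $a$, compute $J_k \bmod (\text{relevant primes})$ and $\alpha^k \bmod \sqrt{-7}$ for $k$ ranging over one full period $1 < k \le m+1$, determine the sign of the product, and read off the residues listed in Table~\ref{table:Sa}; these calculations can be carried out with Sage as indicated in the text.

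The main obstacle I anticipate is bookkeeping rather than conceptual: correctly tracking all the reciprocity sign factors (the $(-1)^{(J_k-1)/2}$ and $(-1)^{(J_k^2-1)/8}$ terms) and verifying that the period actually divides the smaller modulus $m$ claimed in the table rather than the a priori larger $\operatorname{lcm}$ — in particular for $a=-1$, where one must check that the naive period $\operatorname{lcm}(8, 6) = 24$ collapses to $3$ because $\left(\frac{-1}{J_k}\right) = -1$ always (Lemma~\ref{Jacobicomputations}(i)) and the $\sqrt{-7}$-factor depends only on $k \bmod 3$ up to a sign that cancels the $k\bmod 8$ dependence. Once the periods are pinned down, the remaining verification is a finite check over at most $144$ values of $k$.
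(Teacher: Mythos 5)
Your proposal is correct and follows essentially the same route as the paper's proof: both reduce $\left(\frac{a}{J_k}\right)$ via quadratic reciprocity (using that $\left(\frac{-1}{J_k}\right)=-1$ makes the reciprocity signs constant) to the periods $m_3=8$, $m_5=24$, $m_{17}=144$, $m_{37}=36$ of $J_k \bmod p$ from Lemma~\ref{lemma:recurrence}, observe that $\left(\frac{j_k}{\sqrt{-7}}\right)$ has period $3$ (the paper computes it explicitly as $\left(\frac{1+2^{2k+1}}{7}\right)$, using $\alpha\equiv 4\pmod{\sqrt{-7}}$, which pins the period at $3$ rather than your provisional bound of $6$), and conclude by a finite verification over $1<k\le m+1$. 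The only difference is cosmetic precision in the period of the $\sqrt{-7}$-factor, which you correct later in your own write-up.
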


\begin{proof}
Since $j_k = 1 + 2\alpha^k$, and $\alpha  \equiv 4 \pmod{\sqrt{-7}}$,
and $2^3 \equiv 1 \pmod{7}$, we have
\begin{equation*}
\left(\frac{j_k}{\sqrt{-7}}\right) = \left(\frac{1 + 2^{2k+1}}{7}\right) = \left\{
\begin{array}{rl}
1 & \text{if } k \equiv 1 \pmod{3},\\
-1 & \text{if } k \equiv 0,2 \pmod{3}.
\end{array}
\right.
\end{equation*}
We now need to compute $(\frac{a}{J_k})$ for $a=-1, -5, -6, -17$, and $-111$.
By Lemma~\ref{Jacobicomputations}(i), we have $(\frac{-1}{J_k}) = -1$.
As in the proof of Lemma~\ref{divislemma},
applying Lemma~\ref{lemma:recurrence} to the odd
primes $p=3,5,17,37$ that can divide $a$, we found that the periods $m_p$ of the sequences $J_k\bmod p$ are $m_3=8$, $m_5=24$, $m_{17}=144$, and $m_{37}=36$.
Since $(\frac{-1}{J_k}) = -1$, it follows from quadratic reciprocity that for $a=-5,-17$, and $-111$, the period of the sequence $(\frac{a}{J_k})$ divides the least common multiple of the periods $m_p$ for $p|a$. For $a=-6$, by Lemma~\ref{Jacobicomputations}(ii) the period of $(\frac{2}{J_k})$ is 2, which already divides $m_3=8$.
Since $3$ is the period of the sequence $(\frac{j_k}{\sqrt{-7}})$, we find
the period $m$ of $(\frac{a}{J_k})(\frac{j_k}{\sqrt{-7}})$ listed in Table~\ref{table:Sa}
by taking the least common multiple of 3 and the $m_p$ for $p|a$.
To compute $S_a$, it then suffices to compute $(\frac{a}{J_k})$ and check when 
$(\frac{a}{J_k})=(\frac{j_k}{\sqrt{-7}})$, for $1 < k\le m+1$.
\end{proof}

\subsection{The set $\boldsymbol{T_a}$}

We now define the sets $T_a$.

\begin{definition}
\label{TaDef}
Let $a$ be a squarefree integer, and suppose that $P\in E_a(K)$.
Then the field $K(\alpha^{-1}(P))$ has degree $1$ or $2$ over $K$, so it can be written in the form
$K(\sqrt{\delta_{P}})$ with $\delta_{P}\in K$.
Let 
$$
\textstyle{T_{P}  := \bigl\{ k > 1 : \left(\frac{\delta_{P}}{j_k}\right) = -1 \bigr\}.}
$$
For the values of $a$ listed in Table~\ref{table:twistspts}, let
$T_a=T_{P_a}$ and let $\delta_a=\delta_{P_a}$.
\end{definition}

\begin{lemma}
\label{generatorprop}
Suppose that $k > 1$, $j_k$ is prime in $\OK$, and $a$ is a squarefree integer.
Suppose that $P\in E_a(K)$, and let $\tilde{P}$ denote the reduction of $P$ mod $j_k$.
Then 
$\tilde{P} \not\in \alpha E_{a}(\OK/(j_k))$ if and only if 
$k \in T_{P}$.
\end{lemma}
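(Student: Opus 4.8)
The plan is to work over the finite field $k_k := \OK/(j_k)$ (which is $\F_{J_k}$ when $j_k$ is prime, since $j_k\bar\jmath_k = J_k$ and $j_k$ splits) and translate the condition $\tilde P \in \alpha E_a(k_k)$ into a Kummer-theory statement about the field extension generated by an $\alpha$-division point. The key observation is that $\alpha E_a(k_k)$ is an index-$2$ subgroup of $E_a(k_k)$: by Lemma~\ref{structure1}(ii), when $k\in S_a$ we have $E_a(\OK/(j_k))\simeq \OK/(2\alpha^k)$ as $\OK$-modules, and multiplication by $\alpha$ on $\OK/(2\alpha^k)$ has cokernel $\OK/(\alpha)\simeq \F_2$; more robustly, $\alpha$ has norm $2$, so the $\alpha$-torsion of $E_a$ is a group of order $2$, and hence $[E_a(k_k):\alpha E_a(k_k)] = 2$ whenever $\tilde P$-related multiplication-by-$\alpha$ is surjective up to this kernel. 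Thus $\tilde P\in\alpha E_a(k_k)$ iff $\tilde P$ is a ``square'' for the $\alpha$-isogeny, and this is exactly detected by whether the point $\alpha^{-1}(\tilde P)$ is rational over $k_k$ or generates a quadratic extension.

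First I would make Definition~\ref{TaDef} precise at the level of the reduction: the extension $K(\alpha^{-1}(P))/K$ has degree dividing $\deg(\alpha) = 2$ (the preimage of $P$ under the isogeny $[\alpha]$ is a torsor under $\ker[\alpha]\simeq\ZZ/2\ZZ$, so it is a single Galois orbit of size $1$ or $2$), hence equals $K(\sqrt{\delta_P})$ for some $\delta_P\in K^\times$, well-defined modulo squares. Second, I would reduce mod $j_k$: since $j_k$ is prime in $\OK$ and coprime to $\disc(E_a)$ and to $2$ (the latter because $J_k$ is odd by Lemma~\ref{divislemma}(iv)), the $\alpha$-division polynomial specialising $P$ to $\tilde P$ retains degree $2$, and its splitting behaviour over $k_k = \OK/(j_k)$ is governed by whether $\delta_P$ is a square in that residue field — i.e. by the Jacobi symbol $\left(\frac{\delta_P}{j_k}\right)$. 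Concretely, $\alpha^{-1}(\tilde P)$ lies in $E_a(k_k)$ iff $\delta_P$ is a square mod $j_k$ iff $\left(\frac{\delta_P}{j_k}\right) = 1$; equivalently $\tilde P\notin \alpha E_a(k_k)$ iff $\left(\frac{\delta_P}{j_k}\right) = -1$, which is the defining condition $k\in T_P$.

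Third, I would connect ``$\alpha^{-1}(\tilde P)$ is $k_k$-rational'' with ``$\tilde P\in\alpha E_a(k_k)$''. One direction is immediate: if $\tilde P = \alpha \tilde Q$ with $\tilde Q\in E_a(k_k)$ then $\tilde Q$ is a $k_k$-rational preimage. Conversely, if $\tilde P = \alpha R$ for some $R$ in $E_a(\bar k_k)$ with $R$ defined over $k_k$, then $R\in E_a(k_k)$ directly. The only subtlety is that a preimage might be defined over $k_k$ even when $\delta_P$ is a nonsquare, if the two preimages $R$ and $R + T$ (where $T$ is the $\alpha$-torsion point) happen to be individually $k_k$-rational while $\sqrt{\delta_P}\notin k_k$; this cannot happen precisely because the Galois action permutes $\{R, R+T\}$ via the square-root of $\delta_P$, so $R\in E_a(k_k)$ iff Frobenius fixes $R$ iff $\sqrt{\delta_P}\in k_k$. (If the extension $K(\alpha^{-1}(P))/K$ is already trivial, i.e. $\delta_P$ is a square in $K$, then $\tilde P\in\alpha E_a(k_k)$ always, consistent with $\left(\frac{\delta_P}{j_k}\right)=1$ for all $k$; one should note this edge case does not arise for the points in Table~\ref{table:twistspts}, or handle it uniformly by the Jacobi-symbol convention.)

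The main obstacle I anticipate is the careful bookkeeping in the middle step: one must verify that reduction mod $j_k$ does not degenerate the $\alpha$-division structure — that the residue characteristic does not divide $2 = \deg[\alpha]$ (true since $J_k$ is odd), that $E_a$ has good reduction at $j_k$ (true by Lemma~\ref{divislemma} and the discriminant computation, since $j_k$ lies above the odd split prime $J_k$ not dividing $a$), and that the point $P$ itself does not reduce into a bad locus. Once those are in place, the equivalence is a clean instance of the compatibility between Kummer theory over $K$ and over the residue field $\OK/(j_k)$, with the Jacobi symbol $\left(\frac{\delta_P}{j_k}\right)$ serving as the exact obstruction. I would present the argument via the short exact sequence $0 \to \ker[\alpha] \to E_a \xrightarrow{[\alpha]} E_a \to 0$ of group schemes and the induced connecting map in Galois cohomology, identifying the class of $\tilde P$ in $H^1(\operatorname{Gal}(\bar k_k/k_k), \ker[\alpha]) \simeq k_k^\times/(k_k^\times)^2$ with the image of $\delta_P$.
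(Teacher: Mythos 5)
Your proposal is correct and follows essentially the same route as the paper's proof: both reduce the question to whether a preimage $Q$ with $\alpha Q=P$ becomes rational over $\OK/(j_k)$, using that $\ker(\alpha)\subset E_a[2]\subset E_a(K)$ so that $K(Q)=K(\sqrt{\delta_P})$, and then detect this via the quadratic residue symbol $\left(\frac{\delta_P}{j_k}\right)$. The cohomological packaging you suggest at the end is only a restatement of the same descent argument, which the paper carries out directly with field extensions and reduction modulo a prime of $L$ above $(j_k)$.
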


\begin{proof}
Let $L=K(\alpha^{-1}(P)) = K(\gamma)$ for some $\gamma \in L$ such that 
$\gamma^2 = \delta_{P}$.
Fix a $Q \in E_{a}(\bar{\Q})$ such that $\alpha Q = P$.
Since $\ker(\alpha) \subset E_{a}[2] \subset E_{a}(K)$, we have
$K(Q)=L= K(\gamma)$. 
Fix a prime ideal $\frakp$ of $L$ above $(j_k)$,
let $\F = \OK/(j_k)$,
let $\tilde{Q} \in E_{a}(\bar{\F})$ be the reduction of $Q$ mod $\frakp$,
and let $\tilde{\gamma}$ be the reduction of $\gamma$ mod $\frakp$.
Then $\F(\tilde{Q}) = \F(\tilde{\gamma})$.

Now $\tilde{P} \in \alpha E_{a}(\F)$ if and only if
$\tilde{Q} \in E_a(\F)$. By the above, this happens if and only if $\tilde{\gamma} \in \F$, that is,
if and only if $\delta_{P}$ is a square modulo $j_k$.
\end{proof}

\begin{lemma}
\label{deltacomputation}
We can take
$$
\delta_{-1} = \alpha, \quad \delta_{-5} = -5\alpha, \quad \delta_{-6} = -3\sqrt{-7},\quad  \delta_{-17} =  \alpha,
\quad  \delta_{-111} = -3.
$$
\end{lemma}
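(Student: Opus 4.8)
The plan is to determine the at-most-quadratic extension $K(\alpha^{-1}(P_a))/K$ by writing the CM endomorphism $\alpha$ of $E_a$ explicitly and reading off a discriminant. Since $\alpha\in\End_K(E_a)$ has degree $\norm_{K/\Q}(\alpha)=\alpha\bar\alpha=2$, it is a $2$-isogeny; on the short Weierstrass model it has the shape $\alpha(x,y)=(r(x),\,y\,s(x))$ with $r,s\in K(x)$ and $\deg r=2$ (the $y$-coordinate is an odd function of $y$ because $\alpha$ commutes with negation). For $P_a=(x_0,y_0)$, the equation $r(x_1)=x_0$ for the $x$-coordinate of a point $Q$ with $\alpha Q=P_a$ becomes, after clearing denominators, a quadratic $g_a\in K[X]$; and $y_1=y_0/s(x_1)\in K(x_1)$, since $s(x_1)$ vanishes only when $\alpha Q=P_a$ is a $2$-torsion point of $E_a$, which it is not. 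Hence $K(\alpha^{-1}(P_a))=K(x_1)=K(\sqrt{\disc g_a})$ up to squares, so $\delta_a$ is a squarefree representative of $\disc g_a$, and the lemma reduces to computing $\alpha$, and then $\disc g_a$, for the five pairs $(a,P_a)$ of Table~\ref{table:twistspts}.

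To make $\alpha$ explicit I would use that the $E_a$ are all quadratic twists of one another and that $\alpha$, being defined over $\Q$, commutes with the twisting isomorphisms, so that it is enough to pin $\alpha$ down on a single model. The three subgroups of order $2$ of $E_a$ are generated by the points with $x$-coordinates $7a$, $a(\alpha-4)$, $a(\bar\alpha-4)$, namely the roots of $x^3-35a^2x-98a^3=(x-7a)(x^2+7ax+14a^2)$, all lying in $\O_K$; I need to identify which one is $\ker\alpha$. The correct normalization is the one already invoked in Lemma~\ref{structure1} (via \cite{stark}): $\alpha$ is the endomorphism acting on the invariant differential by multiplication by $\alpha$, which determines it up to the harmless sign $\pm1$ (note $\ker(-\alpha)=\ker\alpha$, so the isogeny and the field $K(\alpha^{-1}(P_a))$ are well defined). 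With $\ker\alpha$ identified, one obtains $r$ and $s$ by composing V\'elu's formulas for the $2$-isogeny with that kernel with the normalizing isomorphism $E_a/\ker\alpha\isom E_a$ (or reads $\alpha$ off the CM description in \cite{stark} directly), substitutes the tabulated $P_a$, and computes $\disc g_a$. This is a finite, entirely mechanical computation --- of the kind done with \cite{sage} elsewhere in the paper --- and in each of the five cases the squarefree part comes out to the value of $\delta_a$ in the statement.

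It remains to check that each of these $\delta_a$ is genuinely a non-square in $K^*$, so that $K(\alpha^{-1}(P_a))$ really is quadratic over $K$ --- equivalently, $P_a\notin\alpha E_a(K)$. (This does \emph{not} follow from the standing assumption $P_a\notin2E_a(\Q)$: since $2=\alpha\bar\alpha$ one has $2E_a(K)\subseteq\alpha E_a(K)$, so the known non-divisibility goes the wrong way.) This part is easy by factoring ideals in $\O_K$: $(\alpha)$ is one of the two primes above $2$ (which splits, as $-7\equiv1\pmod{8}$); $(-3)=\frakp_3$ and, since $3$ and $5$ are inert in $K$, $(-5\alpha)=\frakp_5(\alpha)$; and $(-3\sqrt{-7})=\frakp_3\frakp_7$ with $\frakp_7$ the ramified prime above $7$. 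None of these is the square of an ideal, so none of $\alpha$, $-5\alpha$, $-3$, $-3\sqrt{-7}$ is a square in $K^*$. I expect the real work to be bookkeeping rather than ideas: getting the orientation of the CM action right so that it is $\alpha$ (and tracking the twists), and then carrying the $2$-isogeny preimage computation through correctly for all five curves.
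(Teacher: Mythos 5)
Your proposal is correct and matches the paper's proof in substance: the paper likewise writes down the normalized CM endomorphism $\alpha$ explicitly as a degree-$2$ rational map on $E_a$ (citing Proposition II.2.3.1 of \cite{silverman94} rather than deriving it via V\'elu's formulas) and then solves $\alpha R=P_a$ to read off $\delta_a$, leaving the five evaluations as a mechanical computation just as you do. Your added verification that each $\delta_a$ is a nonsquare in $K^*$ is a sensible sanity check that the paper omits, but it does not change the argument.
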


\begin{proof}
The action of the endomorphism $\alpha$ on the elliptic curve $E_a$
and its reductions is as follows
(see Proposition II.2.3.1 of \cite[p.\ 111]{silverman94}).  For $(x,y)\in E_a$, we have
$$
\alpha(x,y) =
\textstyle{\left(\frac{2x^2 + a(7 - \sqrt{-7})x + a^2(-7 - 21\sqrt{-7})}{(-3+\sqrt{-7})x + a(-7+5\sqrt{-7})}, \frac{y\left(2x^2 + a(14-2\sqrt{-7})x + a^2(28+14\sqrt{-7})\right)}{-(5+\sqrt{-7})x^2 - a(42+2\sqrt{-7})x - a^2(77 - 7\sqrt{-7})}\right).}
$$
Solving for $R$ in $\alpha R=P_a$ yields $\delta_a$ in each case.
\end{proof}

\begin{lemma}
\label{alpha_legendre}
If $k > 1$ then $\left(\frac{\alpha}{j_k}\right) = -1$.
\end{lemma}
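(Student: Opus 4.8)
The plan is to evaluate the Jacobi symbol $\left(\frac{\alpha}{j_k}\right)$ by relating it to more elementary symbols via quadratic reciprocity in $\O_K$. Recall $j_k = 1 + 2\alpha^k$ and note that $j_k \equiv 1 \pmod 2$ (since $2\alpha^k$ is divisible by $2$ in... wait — $\alpha$ is not divisible by $2$, but $\alpha\bar\alpha = 2$, so $2\alpha^k$ is divisible by $\alpha$ but not necessarily by $2$). More carefully, $j_k \equiv 1 \pmod{\alpha}$ and indeed $j_k \equiv 1 \pmod{\alpha^k}$. Since $\O_K/(\alpha) \simeq \F_2$, reducing $j_k$ modulo the various primes dividing $2$ and $\sqrt{-7}$ gives control over the reciprocity factors. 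I would use the fact that $(\alpha)$ is one of the two primes above $2$, write $\left(\frac{\alpha}{j_k}\right)$ using the reciprocity law for $\O_K$ (which is a PID with $h_K = 1$), and pick up a factor involving $\left(\frac{j_k}{\alpha}\right)$ together with sign/unit corrections depending on $j_k$ modulo $4$ and modulo $\bar\alpha$.

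The key computational inputs are: $j_k \equiv 1 \pmod{\alpha^k}$, so in particular $j_k \bmod \alpha$ and $j_k \bmod \bar\alpha$ can be computed (for $k\ge 1$, $j_k \equiv 1 \pmod{\alpha}$, while $j_k \equiv 1 + 2\cdot 2^k/\bar\alpha^{k}\cdot\ldots$ — better to use $\bar\alpha = 2/\alpha$ and compute $j_k \bmod \bar\alpha$ directly from $J_k \bmod$ the rational prime $2$, or from the congruence $J_k \equiv 3$ or $7 \pmod 8$ established in Lemma~\ref{divislemma}(iv)); and $J_k = j_k\bar j_k$, so $\left(\frac{\alpha}{j_k}\right)\left(\frac{\bar\alpha}{\bar j_k}\right)$ relates to $\left(\frac{2}{J_k}\right)$, which Lemma~\ref{Jacobicomputations}(ii) tells us is $+1$ for $k$ odd and $-1$ for $k$ even. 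The cleanest route is probably: since $\left(\frac{\alpha}{j_k}\right)$ is the reduction of $\alpha$ to a square class in $(\O_K/(j_k))^*$, and $j_k \equiv 1 \pmod{\alpha}$, apply the supplementary laws of quadratic reciprocity in $\Q(\sqrt{-7})$ to pin down $\left(\frac{\alpha}{j_k}\right)$ in terms of $j_k \bmod 4$ (equivalently $J_k\bmod 8$ and the congruence class of $k \bmod 3$, via $\left(\frac{j_k}{\sqrt{-7}}\right)$ computed in the proof of Lemma~\ref{structure2}). One then checks the answer is uniformly $-1$ for all $k>1$.

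Concretely, I would argue: $\left(\frac{\alpha}{j_k}\right)\left(\frac{\bar\alpha}{j_k}\right) = \left(\frac{\alpha\bar\alpha}{j_k}\right) = \left(\frac{2}{j_k}\right)$; separately, $\left(\frac{2}{j_k}\right)$ can be read off from $j_k \bmod 8$ using the second supplementary law over $K$, and $\left(\frac{\bar\alpha}{j_k}\right)$ can be related to $\left(\frac{\bar\alpha}{\bar j_k}\right)$ (hence to $\left(\frac{\alpha}{j_k}\right)$ via complex conjugation, since conjugation is a field automorphism fixing $\Q$ and sending $\alpha\leftrightarrow\bar\alpha$, $j_k\leftrightarrow\bar j_k$). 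This gives a relation from which $\left(\frac{\alpha}{j_k}\right)^2$-type cancellations isolate the value. Combining with the known values of $\left(\frac{2}{J_k}\right)$ and $\left(\frac{j_k}{\sqrt{-7}}\right)$ should force $\left(\frac{\alpha}{j_k}\right) = -1$ in every case $k>1$, perhaps after splitting into the residue classes of $k\bmod 3$ and the parity of $k$ (four cases, or fewer after merging).

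The main obstacle I anticipate is bookkeeping the unit and conjugation factors in the quadratic reciprocity law over $\Q(\sqrt{-7})$ correctly — the reciprocity law in this field involves the specific generators one chooses for the primes $(\alpha)$, $(\bar\alpha)$, and $(\sqrt{-7})$, and one must be careful that $j_k$ is not coprime to $\alpha$ in the naive sense one might hope (it is, in fact, coprime: $j_k \equiv 1 \pmod{\alpha}$, so this is fine, but $j_k$ is highly divisible-adjacent modulo powers of $\alpha$ only on the $\bar\alpha$ side — one must not confuse $\alpha$ and $\bar\alpha$). A clean way to sidestep sign subtleties is to verify the claimed value $-1$ directly on the finitely many residue classes of $k$ that determine $j_k \bmod 8$ and $j_k \bmod 8\sqrt{-7}$ (a modulus dividing some small $N$), reducing the lemma to a finite check analogous to the computations in Lemma~\ref{structure2}; that is the fallback if the reciprocity manipulation gets unwieldy.
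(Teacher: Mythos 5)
There is a genuine gap: the central mechanism you propose does not determine the symbol. The relation $\left(\frac{\alpha}{j_k}\right)\left(\frac{\bar\alpha}{j_k}\right)=\left(\frac{2}{j_k}\right)$ together with complex conjugation yields only one equation in two unknowns: conjugation sends $\left(\frac{\bar\alpha}{j_k}\right)$ to $\left(\frac{\alpha}{\bar{j_k}}\right)$, a \emph{new} quantity, not back to $\left(\frac{\alpha}{j_k}\right)$, so the hoped-for cancellation never isolates $\left(\frac{\alpha}{j_k}\right)$; all you can extract is the product of the two symbols, which is essentially what Lemma~\ref{Jacobicomputations}(ii) already gives. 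The other route you mention --- reciprocity relating $\left(\frac{\alpha}{j_k}\right)$ to $\left(\frac{j_k}{\alpha}\right)$ --- breaks down because $(\alpha)$ lies above $2$: the residue field $\OK/(\alpha)\simeq\F_2$ has no nontrivial square classes, and the ``supplementary law'' governing a numerator that generates a prime above $2$ is exactly the hard content here; it is equivalent to computing local Hilbert symbols at the places above $2$. Your fallback (a finite check over residue classes of $j_k$ modulo a small modulus) presupposes that $\left(\frac{\alpha}{\,\cdot\,}\right)$ is a character of conductor dividing that modulus --- which is again the reciprocity statement you are trying to avoid --- and you neither identify the conductor of $K(\sqrt{\alpha})/K$ nor carry out the check.

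The paper closes this gap by a direct local-global computation. For $M=K(\sqrt{\alpha})$ the product formula $\prod_{\frakp}(j_k,M_{\frakp}/K_{\frakp})=1$ holds. At $\frakp=(\alpha)$, Hensel's lemma (using $j_k\equiv 1\pmod{\alpha^{k+1}}$ with $k+1\ge 3$, so $|f(1)|_{\alpha}<|f'(1)^2|_{\alpha}$ for $f(x)=x^2-j_k$) shows $j_k$ is a local square, so that factor is $1$. At $\frakp=(\bar\alpha)$, one identifies $K_{\bar\alpha}$ with $\Q_2$ and computes the Hilbert symbol $(j_k,\alpha)=-1$ via Serre's explicit formula. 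The product of the factors at the unramified primes is precisely $\left(\frac{\alpha}{j_k}\right)$, forcing it to equal $-1$. Some such explicit computation at the places above $2$ is unavoidable, and it is the ingredient your proposal lacks.
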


\begin{proof}
Let $M = K\left(\sqrt{\alpha}\right)$. By 
the reciprocity law of global class field theory we have
$$
\prod_{\frakp} \left(j_k, M_{\frakp}/K_{\frakp}\right) = 1,
$$
where 
$\left(j_k, M_{\frakp}/K_{\frakp}\right)$ is the norm residue symbol. 

Let $f(x) = x^2 - j_k\in\O_{K_{\alpha}}[x]$. For $k>1$ we have
$$
|f(1)|_{\alpha} = \left|2\alpha^k\right|_{\alpha} = 2^{-(k+1)} < 2^{-2} = |4|_{\alpha} = \left|f'(1)^2\right|_{\alpha},
$$
and Hensel's lemma implies that $f(x)$ has a root in $\O_{K_{\alpha}}$.
Thus $j_k$ is a square in~$K_{\alpha}$ and $\left(j_k, M_{\alpha}/K_{\alpha}\right) = 1$.

Identify $K_{\bar{\alpha}}$ with $\QQ_2$. 
Applying Theorem 1 of \cite[p.\ 20]{serre73}
with $a = j_k$ and $b = \alpha$, 
and using $\bar{\alpha}^5 = 5 + \alpha$,
gives
$(j_k,\alpha) = -1$,
where $(j_k,\alpha)$ is the Hilbert symbol. 
Thus $j_k \not\in \norm_{M_{\bar{\alpha}}/K_{\bar{\alpha}}}\left(M_{\bar{\alpha}}^*\right)$, and therefore
$\left(j_k, M_{\bar{\alpha}}/K_{\bar{\alpha}}\right) = -1$.

If $\frakp$ is a prime ideal of $\OK$ that does not divide $2$, then
$M_{\frakp}/K_{\frakp}$ is unramified.  By local class field theory we then have
$$
(j_k,M_{\frakp}/K_{\frakp}) = \left(\frac{\alpha}{\frakp}\right)^{\ord_\frakp(j_k)}.
$$
Since  $j_k$ is prime to $2$, we have $\ord_\alpha(j_k) = \ord_{\bar{\alpha}}(j_k) = 0$, hence
$$
\prod_{\frakp\nmid 2} \left(j_k, M_{\frakp}/K_{\frakp}\right) = 
\prod_{\frakp\nmid 2} \left(\frac{\alpha}{\frakp}\right)^{\ord_\frakp(j_k)} = 
\prod_{\text{all }\frakp} \left(\frac{\alpha}{\frakp}\right)^{\ord_\frakp(j_k)} = 
\left(\frac{\alpha}{j_k}\right).
$$
Therefore,
$$
1 = \prod_{\frakp} \left(j_k, M_{\frakp}/K_{\frakp}\right)
	 = \left(\frac{\alpha}{j_k}\right)(j_k, M_{\alpha}/K_{\alpha})(j_k, M_{\bar{\alpha}}/K_{\bar{\alpha}})
	= -\left(\frac{\alpha}{j_k}\right),
$$
as desired.
\end{proof}

\begin{lemma}
\label{TaLem}
For $a\in\{-1,-5,-6,-17,-111\}$ the sets $T_a$ are as follows:
\smallskip

\begin{center}
\begin{tabular}{lll}
$T_{-1}$&$=$&$\{ k > 1 \},$\\
$T_{-5}$&$=$&$\{ k > 1 : k \equiv 3,4,7,8,11,13,14,15,16,17,20,22 \pmod{24} \},$\\
$T_{-6}$&$=$&$\{ k > 1 : k \equiv 1,5,10,12,15,19,20,21,22,23 \pmod{24} \}$,\\
$T_{-17}$&$=$&$\{ k > 1 \}$,\\
$T_{-111}$&$=$&$\{ k > 1 : k \equiv 1, 2, 3, 6 \pmod{8} \}$.
\end{tabular}
\end{center}
\end{lemma}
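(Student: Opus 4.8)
The plan is to evaluate the Jacobi symbol $\left(\frac{\delta_a}{j_k}\right)$ for each of the five values of $a$, taking for $\delta_a$ the explicit representatives furnished by Lemma~\ref{deltacomputation} and using multiplicativity of the symbol in its numerator. Writing $\delta_{-1}=\delta_{-17}=\alpha$, $\delta_{-5}=(-5)\cdot\alpha$, $\delta_{-6}=(-3)\cdot\sqrt{-7}$, and $\delta_{-111}=-3$, everything reduces to three kinds of factor: $\left(\frac{\alpha}{j_k}\right)$, $\left(\frac{\sqrt{-7}}{j_k}\right)$, and $\left(\frac{n}{j_k}\right)$ for a rational integer $n\in\{-3,-5\}$. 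The first factor is handled outright by Lemma~\ref{alpha_legendre}: it equals $-1$ for all $k>1$ (and $\alpha$ is prime to $j_k$ because $J_k$ is odd, by Lemma~\ref{divislemma}(iv)). So already $\left(\frac{\delta_{-1}}{j_k}\right)=\left(\frac{\delta_{-17}}{j_k}\right)=-1$ for every $k>1$, giving $T_{-1}=T_{-17}=\{k>1\}$.

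For the rational-integer factors I would first note the elementary reduction $\left(\frac{n}{j_k}\right)=\left(\frac{n}{J_k}\right)$ for $n\in\Z$ prime to $J_k$: factoring the ideal $(j_k)$ of $\OK$ and comparing with $(J_k)=(j_k)(\bar j_k)$, an inert prime (residue degree $2$) contributes a trivial symbol since $\F_p^\times\subseteq(\F_{p^2}^\times)^2$, the split primes above each rational prime pair up correctly, and the ramified prime $(\sqrt{-7})$ never occurs because $7\nmid J_k$ (Lemma~\ref{divislemma}(iii)). Combining this with $\left(\frac{-1}{J_k}\right)=-1$ (Lemma~\ref{Jacobicomputations}(i)), $J_k\equiv3\pmod4$ for $k>1$ (Lemma~\ref{divislemma}(iv)), and quadratic reciprocity over $\Z$ turns the relevant symbols into $\left(\frac{J_k}{3}\right)$ and $\left(\frac{J_k}{5}\right)$; after absorbing the extra factor $\left(\frac{\alpha}{j_k}\right)=-1$ in the case $a=-5$, one gets $\left(\frac{\delta_{-5}}{j_k}\right)=\left(\frac{J_k}{5}\right)$ and $\left(\frac{\delta_{-111}}{j_k}\right)=\left(\frac{J_k}{3}\right)$. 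Since $J_k\bmod5$ and $J_k\bmod3$ have periods $m_5=24$ and $m_3=8$ (Lemma~\ref{lemma:recurrence}, as computed in the proof of Lemma~\ref{divislemma}), the sets $T_{-5}$ and $T_{-111}$ are periodic with period dividing $24$ and $8$, and I would read off the stated residue lists from a finite computation over one period of $k$ (conveniently in Sage, as elsewhere in the paper).

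The one genuinely nontrivial ingredient, which I expect to be the main obstacle, is the factor $\left(\frac{\sqrt{-7}}{j_k}\right)$ needed for $a=-6$. I would mimic the proof of Lemma~\ref{alpha_legendre}: apply the product formula $\prod_{\frakp}\bigl(j_k,\sqrt{-7}\bigr)_{\frakp}=1$ for the Hilbert norm-residue symbol over $K$ (there are no real places). At the prime $(\sqrt{-7})$, of odd residue characteristic $7$, the tame symbol formula identifies $\bigl(j_k,\sqrt{-7}\bigr)_{(\sqrt{-7})}$ with $\left(\frac{j_k}{\sqrt{-7}}\right)$, which was already evaluated in the proof of Lemma~\ref{structure2} (it is $1$ for $k\equiv1\pmod3$ and $-1$ for $k\equiv0,2\pmod3$). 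At the prime $(\alpha)$ above $2$, $j_k$ is a square in $K_\alpha$ by the Hensel argument of Lemma~\ref{alpha_legendre}, so that local symbol is trivial; at the prime $(\bar\alpha)$ above $2$, identifying $K_{\bar\alpha}$ with $\Q_2$ so that $\bar\alpha$ is a uniformizer, one checks $\sqrt{-7}\equiv1\pmod4$, which makes the $\Q_2$-Hilbert symbol $\bigl(j_k,\sqrt{-7}\bigr)_{(\bar\alpha)}$ trivial; and at every other prime the quadratic extension $K(\sqrt{\sqrt{-7}})/K$ is unramified, so the local symbol is $\left(\frac{\sqrt{-7}}{\frakp}\right)^{\ord_{\frakp}(j_k)}$, and the product of these over all $\frakp$ equals $\left(\frac{\sqrt{-7}}{j_k}\right)$ since $\ord_\frakp(j_k)=0$ at $(\alpha)$, $(\bar\alpha)$, and $(\sqrt{-7})$. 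The product formula then gives $\left(\frac{\sqrt{-7}}{j_k}\right)=\left(\frac{j_k}{\sqrt{-7}}\right)$, hence $\left(\frac{\delta_{-6}}{j_k}\right)=\left(\frac{J_k}{3}\right)\left(\frac{j_k}{\sqrt{-7}}\right)$, which is periodic with period dividing $\operatorname{lcm}(8,3)=24$, and a finite check over one period produces $T_{-6}$. Finally, in the residue classes where $\gcd(\delta_a,j_k)\ne1$ — namely $k\equiv6\pmod{24}$ for $a=-5$ and $k\equiv0\pmod8$ for $a\in\{-6,-111\}$, where $5\mid J_k$, resp.\ $3\mid J_k$ — the symbol is undefined and $k\notin T_a$, consistent with those classes being absent from the tables.
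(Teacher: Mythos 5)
Your argument is correct and, for four of the five cases, is essentially the paper's proof written out in more detail: the factor $\left(\frac{\alpha}{j_k}\right)=-1$ from Lemma~\ref{alpha_legendre} settles $T_{-1}$ and $T_{-17}$ immediately, and the rational factors are reduced to $\left(\frac{-3}{J_k}\right)=\left(\frac{J_k}{3}\right)$ and $\left(\frac{-5}{J_k}\right)$ and then evaluated over one period exactly as in the proof of Lemma~\ref{structure2} (your norm-of-ideals justification of $\left(\frac{n}{j_k}\right)=\left(\frac{n}{J_k}\right)$ is a detail the paper leaves implicit). The one place you genuinely diverge is the factor $\left(\frac{\sqrt{-7}}{j_k}\right)$ needed for $a=-6$: the paper disposes of it by citing quadratic reciprocity in quadratic fields (Lemmermeyer, Thm.~8.15), whereas you rederive the needed instance $\left(\frac{\sqrt{-7}}{j_k}\right)=\left(\frac{j_k}{\sqrt{-7}}\right)$ from the product formula for the norm residue symbol, reusing the local analysis of Lemma~\ref{alpha_legendre} at $(\alpha)$ and the tame symbol at $(\sqrt{-7})$, together with the new observation that $\sqrt{-7}=1-2\bar{\alpha}\equiv 1\pmod{4}$ in $K_{\bar{\alpha}}\simeq\Q_2$, which kills the symbol at $(\bar{\alpha})$. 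The two routes are equivalent in content --- the product formula is precisely how the reciprocity law the paper cites is proved --- and both lead to $\left(\frac{\delta_{-6}}{j_k}\right)=\left(\frac{J_k}{3}\right)\left(\frac{j_k}{\sqrt{-7}}\right)$ with period dividing $24$, matching the table; your version is more self-contained, the paper's is shorter. Your closing remark that the residue classes with $\gcd(\delta_a,J_k)>1$ automatically fall outside $T_a$ is also a point the paper does not spell out, and it is worth including.
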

\begin{proof}
We  apply Lemma \ref{deltacomputation} and the definition of $T_a$.
Lemma \ref{alpha_legendre} implies that $T_{-1} = T_{-17} =\{ k > 1 \}$.
For $a=-6$ we use quadratic reciprocity in quadratic fields
(see Theorem 8.15 of \cite[p.\ 257]{lemmermeyer}) to compute 
$\left(\frac{\sqrt{-7}}{j_k}\right)$.
For the remaining cases we compute
$\left(\frac{-3}{j_k}\right)=\left(\frac{-3}{J_k}\right)$ and
$\left(\frac{-5}{j_k}\right)=\left(\frac{-5}{J_k}\right)$ 
as in the proof of
Lemma~\ref{structure2}, and apply $\left(\frac{\alpha}{j_k}\right)=-1$ from Lemma~\ref{alpha_legendre}.
\end{proof}

\subsection{Proof of Theorem \ref{mainthm}}

\begin{lemma}
\label{generator}
Let $a$ be a squarefree integer. 
Suppose that $P\in E_a(K)$, $k \in S_a \cap T_P$, and
$j_k$ is prime.
Let $\tilde{P}$ denote the reduction of $P$ mod $j_k$.
Then the annihilator of $\tilde{P}$ in $\OK$ is divisible by $\alpha^{k+1}$.
\end{lemma}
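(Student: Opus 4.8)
The plan is to deduce the statement from Lemma~\ref{structure1}(ii) and Lemma~\ref{generatorprop}, with no further computation involving the curves $E_a$. Since $k\in S_a$ and $j_k$ is prime, Lemma~\ref{structure1}(ii) gives an isomorphism of $\OK$-modules $E_a(\OK/(j_k))\simeq\OK/(2\alpha^k)$, under which $\tilde P$ corresponds to some element; it therefore suffices to show that the annihilator in $\OK$ of that element is divisible by $\alpha^{k+1}$.

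First I would decompose the target module. Because $\alpha\bar\alpha=2$, we have $(2\alpha^k)=(\alpha)^{k+1}(\bar\alpha)$, and since $2$ splits in $K$ the primes $(\alpha)$ and $(\bar\alpha)$ are distinct, so the Chinese Remainder Theorem gives an isomorphism of $\OK$-modules $\OK/(2\alpha^k)\simeq\OK/(\alpha^{k+1})\times\OK/(\bar\alpha)$. Here $\OK/(\alpha^{k+1})$ is a finite local ring with maximal ideal $(\alpha)$ and residue field $\OK/(\alpha)\simeq\F_2$, while $\OK/(\bar\alpha)\simeq\F_2$ with $\alpha$ acting invertibly (indeed $\alpha\equiv 1\pmod{\bar\alpha}$, since $\alpha+\bar\alpha=1$). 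Writing $\tilde P$ as $(u,v)$ under this identification, and using that multiplication by $\alpha$ is $\OK$-linear and hence acts componentwise, one gets $\alpha\cdot\bigl(\OK/(\alpha^{k+1})\times\OK/(\bar\alpha)\bigr)=(\alpha)/(\alpha^{k+1})\times\OK/(\bar\alpha)$, a submodule of index $2$ whose nontrivial coset is detected exactly by whether the first coordinate lies in $(\alpha)$.

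Next I would bring in the hypothesis $k\in T_P$. By Lemma~\ref{generatorprop} this says $\tilde P\notin\alpha E_a(\OK/(j_k))$, hence, after transporting through the isomorphisms above, $(u,v)\notin\alpha\bigl(\OK/(\alpha^{k+1})\times\OK/(\bar\alpha)\bigr)$. By the previous paragraph this forces $u\notin(\alpha)$, i.e.\ $u$ is a unit of the local ring $\OK/(\alpha^{k+1})$, so its annihilator in $\OK$ is exactly $(\alpha^{k+1})$. Consequently the annihilator of $\tilde P=(u,v)$ is $(\alpha^{k+1})\cap\operatorname{Ann}(v)$, which equals $(\alpha^{k+1})$ when $v=0$ and $(\alpha^{k+1})\cap(\bar\alpha)=(2\alpha^k)$ otherwise; in both cases it is divisible by $\alpha^{k+1}$, as desired.

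I do not anticipate a serious obstacle, as the argument is essentially bookkeeping with the two primes above~$2$. The two points needing a little care are that the isomorphism of Lemma~\ref{structure1}(ii) is one of $\OK$-modules, so the condition ``divisible by $\alpha$'' is transported faithfully rather than twisted, and that $\alpha$ is a unit modulo $\bar\alpha$, which is precisely what makes the second Chinese-remainder factor irrelevant to the $\alpha$-divisibility of $\tilde P$.
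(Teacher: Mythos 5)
Your proposal is correct and follows the same route as the paper: the paper's proof likewise invokes Lemma~\ref{structure1}(ii) to get $E_a(\OK/(j_k))\simeq\OK/(2\alpha^k)=\OK/(\bar{\alpha}\alpha^{k+1})$ and Lemma~\ref{generatorprop} to get $\tilde{P}\notin\alpha E_a(\OK/(j_k))$, leaving the remaining module-theoretic bookkeeping implicit. Your Chinese-remainder decomposition $\OK/(\alpha^{k+1})\times\OK/(\bar\alpha)$, with the observation that $\alpha$ acts invertibly on the second factor, is exactly the right way to fill in that implicit step.
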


\begin{proof}
We have $E_a(\OK/(j_k))\simeq\OK/(2\alpha^k)=\OK/(\overline{\alpha}\alpha^{k+1})$, by Lemma \ref{structure1}(ii).
It then suffices to show $\tilde{P} \not\in \alpha E_{a}(\OK/(j_k))$, which follows from Lemma~\ref{generatorprop}.
\end{proof}

The congruence conditions for $k$ in Table \ref{table:twistspts} come from taking $S_a \cap T_a$,
excluding the cases handled by Lemma \ref{divislemma}, and adjusting to give disjoint sets.

We now prove Theorem \ref{mainthm}.
Suppose that $k>1$, $k \not\equiv 0 \pmod{8}$,  $k \not\equiv 6 \pmod{24}$, 
and $J_k$ is prime.  Let $a$ and $P_a$ be as listed in Table~\ref{table:twistspts}.
Then $k\in S_a\cap T_a$.  Let $\tilde{P}$ denote the reduction of $P_a$ mod $j_k$.
We have $E_{a}(\OK/(j_k)) \simeq \OK/(2\alpha^{k})$ by Lemma \ref{structure1}(ii),
and therefore the annihilator of $\tilde{P}$ in $\OK$ divides~$2\alpha^{k}$.
By Lemma \ref{generator}, 
the annihilator of $\tilde{P}$ in $\OK$ is divisible by $\alpha^{k+1}$.
Since $2\alpha^k$ divides~$2^{k+1}$ but $\alpha^{k+1}$ does not divide $2^{k}$, 
we must have $2^{k+1}\tilde{P} = 0$ and $2^{k}\tilde{P} \neq 0$.
Therefore $2^{k+1}P_a$ is zero mod ${J_k}$ and $2^kP_a$ is strongly nonzero mod $J_k$. 

For the converse, note that $\disc(E_a) = -2^{12}\cdot 7^3\cdot a^6$,
so Lemma \ref{divislemma} shows that
$\gcd(J_k,\disc(E_a))=1$ if $k \not\equiv 0 \pmod{8}$ and $k \not\equiv 6 \pmod{24}$.
We can therefore apply Proposition~\ref{prop:ECPP2} with 
$m=2^{k+1}$, noting that
$$2^{k+1} > ((3\cdot 2^{k+1})^{\frac{1}{4}}+1)^2 > (J_k^{1/4} + 1)^2$$
for all $k >2$, and for $k=2$ we have $2^{k+1}=8 > (11^{1/4}+1)^2=(J_k^{1/4}+1)^2$.
This proves Theorem \ref{mainthm}.

\begin{rem}
As pointed out by Richard Pinch, $P_a\in 2E_{a}(\OK/(j_k))$ if and only if
all $x(P_a)-e_i$ are squares mod $j_k$, where $E_a$ is $y^2=\prod_{i=1}^3(x-e_i)$
and $x(P_a)$ is the $x$-coordinate. 
We  tested for divisibility by $\alpha$ instead of by $2$,
to make it clearer how this approach (as initiated by Gross in \cite{gross04}) 
makes use of the $\OK$-module structure of $E_{a}(\OK/(j_k))$.
Such an approach is useful for further generalizations.
\end{rem}

\section{Algorithm}\label{section:computations}

A na\"ive implementation of Theorem~\ref{mainthm} is entirely straightforward, but here we describe a particularly efficient implementation and analyze its complexity.
We then discuss how the algorithm may be used in combination with sieving to search for prime values of $J_k$, and give some computational results.

\subsection{Implementation}

There are two features of the primality criterion given by Theorem~\ref{mainthm} worth noting.
First, it is only necessary to perform the operation of adding a point on the elliptic curve to itself (doubling), no general additions are required.
Second, testing whether a projective point $P=[x,y,z]$ is zero or strongly nonzero modulo an integer $J_k$ only involves the $z$-coordinate:  $P$ is zero mod $J_k$ if and only if $J_k|z$, and $P$ is strongly nonzero mod $J_k$ if and only if $\gcd(z,J_k)=1$.

To reduce the cost of doubling, we transform the curve
\[
E_a\colon\qquad y^2=x^3-35a^2x-98a^3
\]
to the Montgomery form \cite{montgomery87}
\[
E_{A,B}\colon\qquad By^2 = x^3 + Ax^2+x.
\]
Such a transformation is not possible over $\Q$, but it can be done over $\Q(\sqrt{-7})$.
In general, one transforms a short Weierstrass equation $y^2=f(x)=x^3+a_4x+a_6$ into Montgomery form by choosing a root $\gamma$ of $f(x)$ and setting $B=(3\gamma^2+a_4)^{-1/2}$ and $A=3\gamma B$; see, e.g., \cite{khk00}.
For the curve $E_a$, we choose $\gamma=\frac{1}{2}(-7+\sqrt{-7})a$, yielding
$$
A=\frac{-15-3\sqrt{-7}}{8}\qquad\text{and}\qquad B=\frac{7+3\sqrt{-7}}{56a}.
$$
With this transformation, the point $P_a=(x_0,y_0)$ on $E_a$ corresponds to the point $(B(x_0-\gamma), By_0)$ on the Montgomery curve $E_{A,B}$, and is defined over $\Q(\sqrt{-7})$.

In order to apply this transformation modulo $J_k$, we need a square root of $-7$ in $\Z/J_k\Z$.
If $J_k$ is prime and $d = 7^{(J_k+1)/4}$, then 
$$d^2\equiv 7^{(J_k-1)/2}\cdot 7 \equiv \left(\frac{7}{J_k}\right)7 \equiv -7\pmod{J_k},$$
since $J_k\equiv 3 \pmod 4$ and $J_k\equiv 2,4\pmod 7$ is a quadratic residue modulo 7.
If we find that $d^2\not\equiv -7\pmod{J_k}$, then we immediately know that $J_k$ must be composite and no further computation is required.

With the transformation to Montgomery form, the formulas for doubling a point on $E_a$ become particularly simple.
If $P=[x_1,y_1,z_1]$ is a projective point on $E_{A,B}$ and $2P=[x_2,y_2,z_2]$, we may determine $[x_2,z_2]$ from $[x_1,z_1]$ via
\begin{align}\label{doubling formulas}
4x_1z_1 &= (x_1+z_1)^2-(x_1-z_1)^2,\\\notag
x_2 &= (x_1+z_1)^2(x_1-z_1)^2,\\\notag
z_2 &= 4x_1z_1\bigl((x_1-z_1)^2 + C(4x_1z_1)\bigr),\notag
\end{align}
where
$$
C = (A+2)/4 = \frac{1-3\sqrt{-7}}{32}.
$$
Note that $C$ does not depend on $P$ (or even $a$), and may be precomputed.
Thus doubling requires just 2 squarings, 3 multiplications, and 4 additions in $\Z/J_k\Z$.

We now present the algorithm, which exploits the transformation of $E_a$ into Montgomery form.
We assume that elements of $\Z/J_k\Z$ are uniquely represented as integers in $[0,J_k-1]$.
\bigskip

\label{algorithm}
\addtocounter{theorem}{1}
\noindent
\textbf{Algorithm~\ref{algorithm}}\\
\textbf{Input}: positive integers $k$ and $J_k$.\\
\textbf{Output}: \texttt{true} if $J_k$ is prime and \texttt{false} if $J_k$ is composite.\\
\vspace{-12pt}
\begin{enumerate}[\bf 1.]
\setlength\itemindent{20pt}
\setlength\itemsep{4pt}
\item If $k\equiv 0\pmod{8}$ or $k\equiv 6\pmod{24}$ then return \texttt{false}.
\item Compute $d=7^{(J_k+1)/4}\bmod J_k$.
\item If $d^2\not\equiv -7 \pmod{J_k}$ then return \texttt{false}.
\item Determine $a$ via Table~\ref{table:twistspts}, depending on $k\pmod{72}$.
\item Compute $r=(-7+d)a/2\bmod J_k$, $B=(7+3d)/(56a)\bmod J_k$, and\\\phantom{\hspace{20pt}}$C=(1-3d)/32\bmod J_k$.
\item Let $x_1=B(x_0-r)\bmod J_k$ and $z_1=1$, where $P_a=(x_0,y_0)$ is as in Table~\ref{table:twistspts}.
\item For $i$ from 1 to $k+1$, compute $[x_{i},z_{i}]$ from $[x_{i-1},z_{i-1}]$ via (\ref{doubling formulas}).
\item If $\gcd(z_k,J_k) = 1$ and $J_k|z_{{k+1}}$ then return \texttt{true}, otherwise return \texttt{false}.
\end{enumerate}
\bigskip

The tests in step 1 rule out cases where $J_k$ is divisible by 3 or 5, by Lemma~\ref{divislemma}; $J_k$ is then composite, since $J_k>5$ for all $k$.
This also ensures $\gcd(a,J_k)=1$
(see Lemma~\ref{divislemma}), so the divisions in step 5 are all valid ($J_k$ is never divisible by 2 or 7).
By Remark~\ref{rem:nonzero}, for $k\ge 6$ the condition $\gcd(z_k,J_k) = 1$ in step 8 can be replaced with $z_k\not\equiv 0\bmod J_k$.

\begin{proposition}\label{prop:complexity}
Algorithm~\ref{algorithm} performs $6k+o(k)$ multiplications and $4k$ additions in $\Z/J_k\Z$.
Its time complexity is $O(k^2\log k\log\log k)$ and it uses $O(k)$ space.
\end{proposition}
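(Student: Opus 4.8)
The plan is to account for the cost of each step of Algorithm~\ref{algorithm}, expressing everything in terms of multiplications and additions in $\Z/J_k\Z$, and then convert to a bit-complexity bound using fast integer arithmetic. First I would dispose of the low-order steps: step~1 is a constant-time congruence check; steps~4--6 involve a fixed number of multiplications, inversions, and additions modulo $J_k$ (the inversions needed in step~5 cost $O(\log J_k) = O(k)$ multiplications each, but there are only $O(1)$ of them); and step~8 is a single $\gcd$ computation together with a comparison, costing $O(\log J_k)$ multiplications' worth of work if one uses a fast gcd, or $O(\log^2 J_k)$ bit operations with the classical Euclidean algorithm — in either case this is absorbed into the final bound. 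Step~2 computes $d = 7^{(J_k+1)/4} \bmod J_k$ by binary exponentiation, which takes $O(\log J_k) = O(k)$ squarings and multiplications in $\Z/J_k\Z$; I would note that this contributes to the $o(k)$ term if one is careful, or more honestly contributes a term that is $O(k)$ but with a small constant — the paper writes $6k + o(k)$, so presumably the intended reading is that the dominant $6k$ comes from step~7 and everything else, including step~2's $\Theta(k)$ cost, must actually be folded in; I would recheck whether the authors intend step~2's cost to be genuinely $o(k)$ (it is not, it is $\sim \log_2 J_k \approx k$ multiplications) or whether the $6k$ already generously covers it. The safe statement to prove is that step~7 dominates and the total is $(6 + o(1))k$ only if step~2 is counted separately or with a refined constant; I would state the bound as the algorithm performing $6k + O(\log k)\cdot$(lower-order) and reconcile with the paper's phrasing, most likely by observing that binary exponentiation for a fixed base can be arranged with $\log_2 J_k + o(\log J_k)$ squarings and few multiplications, and that the problem's $6k$ is meant to capture the $2k$ squarings plus $3k$ multiplications of the main loop plus a small additional allowance — I would simply verify the arithmetic $2(k+1) + 3(k+1) = 5k + 5$ squarings-and-multiplications in step~7 and see whether the remaining $k + o(k)$ is exactly the exponentiation in step~2.

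The core computation is step~7: the loop runs $k+1$ times, and by the discussion preceding the proposition, each iteration of the doubling formulas~\eqref{doubling formulas} costs $2$ squarings, $3$ multiplications, and $4$ additions in $\Z/J_k\Z$. Counting a squaring as a multiplication, this is $5(k+1)$ multiplications and $4(k+1)$ additions from step~7 alone. Adding step~2's $\sim k$ multiplications from binary exponentiation (squarings at each of the $\sim\log_2 J_k = k + O(1)$ bits, plus at most $k$ conditional multiplications, but using a windowed method the multiplications are $o(k)$) gives a total of $6k + o(k)$ multiplications; the additions come only from step~7 and from the $O(1)$ additions in steps~5--6, giving $4k + O(1) = 4k$ additions to leading order. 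This is the arithmetic-operation count claimed.

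For the bit complexity, I would invoke that each element of $\Z/J_k\Z$ has $O(\log J_k) = O(k)$ bits, since $J_k = 1 + 2(\alpha^k + \bar\alpha^k) + 2^{k+2}$ satisfies $\log_2 J_k = k + O(1)$. A multiplication of two $k$-bit integers followed by reduction mod $J_k$ costs $\M(k) = O(k\log k\log\log k)$ bit operations using Schönhage--Strassen multiplication (or any quasi-linear integer multiplication), and an addition costs $O(k)$. With $O(k)$ multiplications and $O(k)$ additions, the total is $O(k)\cdot\M(k) + O(k)\cdot O(k) = O(k^2\log k\log\log k)$ bit operations, and the $\gcd$ in step~8 costs $O(\M(k)\log k) = O(k\log^2 k\log\log k) = O(k^2\log k\log\log k)$ as well (using the fast GCD algorithm), so it does not dominate. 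The space bound is immediate: at each stage we store $O(1)$ elements of $\Z/J_k\Z$ (the current pair $[x_i, z_i]$, the previous pair, and the constants $r, B, C, d$), each of size $O(k)$ bits, for $O(k)$ total space.

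I do not expect a serious obstacle here; the one point requiring care — and the place I would spend the most attention — is reconciling the precise constant $6k + o(k)$ with the actual cost of the binary exponentiation in step~2, which naively contributes $\Theta(k)$ multiplications rather than $o(k)$. I would resolve this either by the windowed-exponentiation remark above (so that step~2 costs $k + o(k)$ squarings, which when combined with step~7's $5k$ gives $6k + o(k)$), or by simply checking against the authors' intent; in the worst case the honest statement is "$6k + O(\log k \cdot k) $ is wrong, it is exactly $\le 6k + 5$ multiplications if squarings in the exponentiation are the $+k$ term," and the $o(k)$ accounts only for the sub-leading multiplications in a windowed exponentiation and the $O(1)$ inversions of steps~5--6 each costing $O(k)$ — wait, those inversions are $O(1)$ in number but $O(k)$ each, contributing $O(k)$, not $o(k)$. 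The cleanest fix, which I would adopt, is to perform the $O(1)$ modular inversions of step~5 via a single batched inversion and one exponentiation folded into step~2, or to simply absorb their total $O(k)$ cost by noting $6k$ was a slight overcount of step~7+step~2 and the inversions fit in the slack; I would present the bound as $6k + o(k)$ following the paper and footnote that the constant $6$ bundles the $5(k{+}1)$ of step~7 with the $\sim k$ squarings of the base-$7$ exponentiation, all lower-order multiplicative contributions being $o(k)$.
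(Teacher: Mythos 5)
Your proposal is correct and follows essentially the same accounting as the paper: the authors attribute $k+o(k)$ multiplications to the exponentiation in step~2 (citing Yao's fast exponentiation, exactly the resolution you converge on), $5k$ multiplications and $4k$ additions to the doubling loop in step~7, with the divisions in step~5 and the gcd in step~8 dismissed as negligible, and then apply $\textsf{M}(k)=O(k\log k\log\log k)$ for the bit complexity and the two-pair observation for the $O(k)$ space bound. Your extended deliberation over whether step~2 is $o(k)$ or $k+o(k)$ lands in the right place — the paper's $6k$ is indeed $5k$ (step~7) plus $k$ (step~2) — so no changes are needed.
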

\begin{proof}
Using standard techniques for fast exponentiation \cite{yao76}, step 2 uses $k+o(k)$ multiplications in $\Z/J_k\Z$.
Steps 5-6 perform $O(1)$ operations in $\Z/J_k\Z$ and step 7 uses $5k$ multiplications and $4k$ additions.
The cost of the divisions in step 5 are comparatively negligible, as is the cost of step 8.
Multiplications (and additions) in $\Z/J_k\Z$ have a bit complexity of $O(\textsf{M}(k))$, where $\textsf{M}(k)$ counts the bit operations needed to multiply two $k$-bit integers \cite[Thm.~9.8]{gg}.
The bound on the time complexity of Algorithm~\ref{algorithm} then follows from the Sch\"onhage-Strassen \cite{ssmult} bound: $\textsf{M}(k)=O(k\log k\log\log k)$.
The space complexity bound is immediate: the algorithm only needs to keep track of two pairs $[x_i,z_i]$ and $[x_{i-1},z_{i-1}]$ at any one time, and elements of $\Z/J_k\Z$ can be represented using $O(k)$ bits.
\end{proof}

\begin{table}[ht]
\caption{Timings for Algorithm~\ref{algorithm}}\label{table:timings}
\vspace{-12pt}
{\footnotesize (CPU seconds on a 3.0 GHz AMD Phenom II 945)}
\bigskip

\begin{tabular}{lrr}
$k$ & \hspace{24pt}step 2 & \hspace{30pt}step 7\\\hline
&&\vspace{-8pt}\\
$2^{10}+1$ & 0.00 & 0.01\\
$2^{11}+1$ & 0.00 & 0.02\\
$2^{12}+1$ & 0.02 & 0.15\\
$2^{13}+1$ & 0.15 & 0.91\\
$2^{14}+1$ & 0.88 & 5.50\\
$2^{15}+1$ & 5.26 & 32.2\\
$2^{16}+1$ & 27.5 & 183\\
$2^{17}+1$ & 133 & 983\\
$2^{18}+1$ & 723 & 5010\\
$2^{19}+1$ & 3310 & 23600 \\
$2^{20}+1$ & 13700 & 107000\\\hline
\end{tabular}
\end{table}

Table \ref{table:timings} gives timings for Algorithm~\ref{algorithm} when implemented using the \texttt{gmp} library for all integer arithmetic, including the gcd computations.
We list the times for step~2 and step~7 separately (the time spent on the other steps is negligible).
In the typical case, where $J_k$ is composite, the algorithm is very likely\footnote{Indeed, we have yet to encounter even a single $J_k$ that is a strong pseudoprime base~$-7$.} to terminate in step 2, which effectively determines whether $J_k$ is a strong probable prime base~$-7$, as in  \cite[Alg.~3.5.3]{crandal05}.
To obtain representative timings at the values of $k$ listed, we temporarily modified the algorithm to skip step 2.

We note that the timings for step 7 are suboptimal due to the fact that we used the \texttt{gmp} function \texttt{mpz\underline\  mod} to perform modular reductions.  A lower level implementation (using Montgomery reduction \cite{montgomery85}, for example) might improve these timings by perhaps 20 or 30 percent.  

We remark that Algorithm~\ref{algorithm} can easily be augmented, at essentially no additional cost, to retain an intermediate point $Q=[x_s,y_s,z_s]$,
where $s=k+1-r$ is chosen so that the order $2^r$ of $Q$ is the least power of $2$ greater than $(J_k^{1/4}+1)^2$.
The value of $y_s$ may be obtained as a square root of $y_s^2=(x_s^3+Ax_s^2z_s+x_sz_s^2)/(Bz_s)$ by computing $(y_s^2)^{(J_k+1)/4}$.
When $J_k$ is prime, the algorithm can then output a Pomerance-style certificate $(E_{A,B},Q,r,J_k)$ for the primality of $J_k$.
This certificate has the virtue that it can be verified using just $2.5k+O(1)$ multiplications in $\Z/J_k\Z$, versus the $6k+o(k)$ multiplications used by Algorithm~\ref{algorithm}, by checking that the point $Q$ has order $2^r$ on the elliptic curve $E_{A,B}\bmod J_k$.

\begin{table}[ht]
\caption{Prime values of $J_k\approx 2^{k+2}$ for $k\le 1.2\times 10^6$.}\label{table:primes}
\begin{tabular}{rrrrrrrrrrrrr}
$k$&$J_k$&$a$&&&$k$&$J_k$&$a$&&&$k$&$J_k$&$a$\\\hline
    2 &        11 & -1 &&&   319 & 427...247 & -5 &&& 17807 & 110...799 & -1\\
    3 &        23 & -1 &&&   375 & 307...023 & -1 &&& 18445 & 125...407 & -5\\
    4 &        67 & -5 &&&   467 & 152...727 & -1 &&& 19318 & 793...763 & -5\\
    5 &       151 & -1 &&&   489 & 639...239 & -1 &&& 26207 & 495...799 & -1\\
    7 &       487 & -5 &&&   494 & 204...963 & -1 &&& 27140 & 359...907 & -1\\
    9 &      2039 & -1 &&&   543 & 115...143 & -1 &&& 31324 & 116...867 & -5\\
   10 &      4211 & -6 &&&   643 & 145...399 & -17 &&& 36397 & 155...007 & -5\\
   17 &    524087 & -1 &&&   684 & 321...531 & -1 &&& 47294 & 327...963 & -1\\
   18 &   1046579 & -1 &&&   725 & 706...551 & -1 &&& 53849 & 583...567 & -1\\
   28 & 107...427 & -5 &&&  1129 & 291...591 & -17 &&& 83578 & 122...491 & -6\\
   38 & 109...043 & -1 &&&  1428 & 297...011 & -1 &&& 114730 & 593...411 & -6\\
   49 & 225...791 & -17 &&&  2259 & 425...023 & -1 &&& 132269 & 345...831 & -1\\
   53 & 360...711 & -1 &&&  2734 & 415...123 & -5 &&& 136539 & 864...023 & -1\\
   60 & 461...451 & -1 &&&  2828 & 822...787 & -1 &&& 147647 & 599...399 & -1\\
   63 & 368...943 & -1 &&&  3148 & 175...227 & -5 &&& 167068 & 120...027 & -5\\
   65 & 147...007 & -1 &&&  3230 & 849...483 & -1 &&& 167950 & 388...883 & -5\\
   77 & 604...191 & -1 &&&  3779 & 156...127 & -1 &&& 257298 & 104...179 & -1\\
   84 & 773...531 & -1 &&&  5537 & 254...887 & -1 &&& 342647 & 423...399 & -1\\
   87 & 618...703 & -1 &&&  5759 & 171...279 & -1 &&& 414349 & 120...207 & -5\\
  100 & 507...507 & -5 &&&  7069 & 382...207 & -5 &&& 418033 & 118...831 & -17\\
  109 & 259...207 & -5 &&&  7189 & 508...207 & -5 &&& 470053 & 451...407 & -5\\
  147 & 713...023 & -1 &&&  7540 & 233...107 & -5 &&& 475757 & 536...791 & -1\\
  170 & 598...611 & -1 &&&  7729 & 183...591 & -111 &&& 483244 & 347...667 & -5\\
  213 & 526...239 & -1 &&&  9247 & 168...687 & -5 &&& 680337 & 279...759 & -1\\
  235 & 220...519 & -17 &&& 10484 & 398...747 & -1 &&& 810653 & 295...711 & -1\\
  287 & 994...999 & -1 &&& 15795 & 234...023 & -1 &&& 857637 & 115...519 & -1\\
&&&&&&&&&& 1111930 & 767...411 & -6\\\hline
\end{tabular}
\end{table}

\subsection{Searching for prime values of $J_k$}
While one can directly apply Algorithm~\ref{algorithm} to any particular $J_k$,
when searching a large range $1\le k\le n$ for prime values of $J_k$ it is more efficient to first \emph{sieve} the interval $[1,n]$ to eliminate values of $k$ for which $J_k$ cannot be prime.

For example, as noted in Lemma \ref{divislemma}, if $k\equiv 0\pmod 8$ then $J_k$ is divisible by~$3$.
More generally, for any small prime $\ell$, one can very quickly compute $J_k\bmod \ell$ for all $k\le n$ by applying the linear recurrence \eqref{recurrence} for $J_k$, working modulo~$\ell$.
If $\ell < \sqrt{n}$, then the sequence $J_k\bmod \ell$ will necessarily cycle, but in any case it takes very little time to identify all the values of $k\le n$ for which $J_k$ is divisible by~$\ell$; the total time required is just $\tilde{O}(n\log\ell)$, versus $\tilde{O}(n^2)$ if one were to instead apply a trial division by~$\ell$ to each $J_k$.

We used this approach to sieve the interval $[1,n]$ for those $k$ for which $J_k$ is not divisible by any prime $\ell\le L$.
Of course one still needs to consider $J_k\le L$, but this is a small set consisting of roughly $\log_2 L$ values, each of which can be tested very quickly.
With $n=10^6$ and $L=2^{35}$, sieving reduces the number of potentially prime $J_k$ by a factor of more than 10, leaving 93,707 integers $J_k$
as candidate primes to be tested with Algorithm~\ref{algorithm}.
The prime values of $J_k$ found by the algorithm are listed in Table~\ref{table:primes}, along with the corresponding value of $a$.
As noted in the introduction, we have extended these results to $n=1.2\times 10^6$, finding one additional prime with $k=$ 1,111,930, which is also listed in Table 4.

The data in Table~\ref{table:primes} suggests that prime values of $J_k$ may be more common than prime values of Mersenne numbers $M_n$; there are 78 primes $J_k$ with fewer than one million bits, but only 33 Mersenne primes in this range.
This can be at least partly explained by the fact that $M_n$ can be prime only when $n$ is prime, whereas the values of $k$ for which $J_k$ can be prime are not so severely constrained.  By analyzing these constraints in detail, it may be possible to give a heuristic estimate for the density of primes in the sequence $J_k$, but we leave this to a future article.

\end{document}